\newcommand{\Z}{\mathbb Z}
\newcommand{\N}{\mathbb N}
\newtheorem{lem}{Lemma}[section]
\newtheorem{defn}[lem]{Definition}
\newtheorem{co}[lem]{Corollary}
\newtheorem{thm}[lem]{Theorem}
\newtheorem{prop}[lem]{Proposition}
\newenvironment{proof}{\textbf{Proof.}}{\newline\hspace*{\fill}{$\Box$}\\}
\begin{document}
\title{Growth in infinite groups of infinite subsets}
\author{J.\,O.\,Button\\
Selwyn College\\
University of Cambridge\\
Cambridge CB3 9DQ\\
U.K.\\
\texttt{jb128@dpmms.cam.ac.uk}}
\date{}
\maketitle
\begin{abstract}
Given an infinite group $G$,
we consider the finitely additive measure 
defined on finite unions of cosets of finite index subgroups. We show
that this shares many properties with the size of subsets of a finite
group, for instance we can obtain equivalent results on Ruzsa distance
and product free sets. In particular if $G$ has infinitely many finite
index subgroups then it
has subsets $S$ of measure arbitrarily close to 1/2 with square $S^2$
having measure less than 1. 
\end{abstract}

\section{Introduction}

The theory of the growth of finite subsets $A,B$ in a group $G$ is well
established: one considers the size of the products sets $AB,A^2,B^2$
and higher powers in terms of $|A|$ and $|B|$. One hopes to be able to
say something about the structure of such sets if any or all of
$|AB|,|A^2|,|A^3|,\ldots$ are comparatively small. The subject
originated with abelian groups but recently results have been
established for non abelian groups too, as discussed in \cite{taop} and
this will be our focus here.

One often takes $G$ to be finite, although in this context we do not
really stick to one group but allow $G$ to vary over an infinite family
of groups (such as the non abelian finite simple groups) or even over
all finite groups. There are also results for infinite groups, where
we might consider one group $G$ only or again a whole class. We just
mention two recent results in this regard: the first, which is Theorem 2
in \cite{borpas}, covers torsion free groups $G$. It states that
there exists a polynomial $p(n)=32(n+3)^6$ such that if $A$ is a finite
subset of any torsion free group $G$ which is not contained in a left
coset of a cyclic subgroup and $|B|>p(n)$ then $|AB|>|A|+|B|+n$. The other
result is that of Safin in \cite{saf} which states that there is $c>0$ such
that $|A^3|\ge c|A^2|$ for $A$ any finite subset of the non abelian free
group $F_2$ of rank 2 with $\langle A\rangle$ not cyclic.

However if our group $G$ is infinite then no matter how quickly the
sequence $|A^n|$ grows, we have that the proportion of elements of
$G$ in $A^n$ is zero. We would like to assign size or measure to infinite
subsets $A,B$ of $G$ and their product sets, in such a way that the measure
of $G$ is finite in order to see if similar results apply. Of course there
are several ways to do this, such as if $G$ is a compact group equipped
with a Haar measure on the Borel subsets of $G$. However in this paper
our groups of interest will often be countable, so any countably additive
measure defined on all subsets of $G$
which is invariant under left multiplication will either
assign 0 to all $g$ in $G$, and hence to $G$, or infinite measure to $G$.

A successful way round this for some countable discrete groups is to
observe that asking for countable additivity is expecting too much and
so one requires a finitely additive left invariant measure $\mu$ with
$\mu(G)=1$ defined on all subsets of $G$, leading us to the important
class of amenable groups. But many countable groups are not amenable,
including any group containing the free group $F_2$ (the only specific
group mentioned so far).

Another way out of this, at least for infinite groups $G$ which are
finitely generated, is to work in the profinite completion
$\hat{G}$ which is a compact group with Haar measure, obtained
from the finite quotients of $G$. Our approach in this paper has
connections with this, in that it also looks at the finite quotients
of a group, but is much simpler. We have already mentioned that
countable additivity is a very strong property to expect in a measure.
Moreover so is having it defined on all subsets of a group, as we see
with the reals and Lebesgue measure. Thus we adopt the more relaxed
approach of merely requiring our measure to be finitely additive on
a class of
subsets which are closed under finite unions, finite intersections,
and complementation. However we require some sets to be measurable, and
any sensible notion of size should assign $1/i$ to a coset $gH$ of a
subgroup $H$ that has index $i$ in $G$. Therefore we leave it at that and
only take finite unions of cosets of finite index subgroups for our
measurable sets, forming the basic measure as defined in Section 2.
Some of our results also hold for any left invariant measure
extending the basic measure, which we term an acceptable measure.

In Section 3 we look at the standard concepts for product sets in finite
groups, especially in the non abelian case as covered in \cite{taop}
and featuring such topics as Ruzsa distance. We show that they easily 
generalise to any infinite group $G$ equipped with the basic measure.
A big plus here is that, just as with finite sets, if $A$ is a subset
of $B$ with the basic measure of $A$ equal to that of $B$ then $A=B$.
This allows us as in the finite case to identify measurable subsets $A$
satisfying certain conditions on their measure, for instance in
Corollary 3.9 where $A^n$ is a proper subset of $G$ for all integers
$n$. We also point out that even for finite groups we can have
generating sets $S$ which are symmetrised but with no $n$ such that
$S^n=G$.

In Section 4 we are interested in comparing the size $|A|$ of a measurable
subset $A$ with $|A^2|$. There has been much work done on this for finite
groups and we first review such results, especially the problem of
trying to find subsets $A$ with $A^2\neq G$ such that $|A|$ is as large
as possible. We obtain Corollary 4.8 which says that if $G$ is an infinite
finitely generated linear group in characteristic zero then there is
always $A\subseteq G$ with $|A|/|G|=1/2$ but such that $A^2\neq G$. This
is achieved by showing in Theorem 4.4 that the same statement is true
when $G$ is any finite group of even order. Another example of a concept
in finite groups that applies here is that of a product free subset $S$ of a
group $G$, which is one where $S$ and $S^2$ do not intersect, and a recent
result of Gowers shows that for any $\epsilon >0$ there is a finite simple
group $G_\epsilon$ such that $|S|/|G|<\epsilon$ for any product free subset
$S$ of $G_\epsilon$. We show that this is still true with $G_\epsilon$ 
an infinite finitely generated residually finite group by applying a
construction of Kassabov and Nikolov which uses profinite groups.

In the last Section, we explore the fact that for any group $G$ Ruzsa
distance is a genuine metric on the set ${\cal S}(G)$ of the finite index
subgroups of $G$. Proposition 5.2 tells us when the triangle inequality
becomes an equality, allowing us to turn this set into a metric graph.
Consequently we have a conjugation action by isometries of $G$ on this
graph and we show that this action is faithful if and only if
$G$ is residually finite with trivial centre. 

\section{Definitions and results}
\begin{defn} Given a set $\Omega$, an {\bf algebra} $\cal A$ of $\Omega$
is a collection of subsets of $\Omega$ with $\Omega\in\cal A$ such that
$A,B\in\cal A$ implies that $A\cup B$ and $A\backslash B\in\cal A$.
\end{defn}

This implies that $\emptyset\in\cal A$. Moreover if $A_1,\ldots , A_n\in\cal A$
then so is $\cup_{i=1}^n A_i$ and $\cap_{i=1}^n A_i$.

\begin{defn} Let $\Omega$ be a set and $\cal A$ an algebra of $\Omega$.
A (finitely additive)
{\bf measure} on $\cal A$ is a non-negative function from $\cal A$ to
$\mathbb R$, where we write $|A|$ for the image of $A$ in $\cal A$,
such that if $A,B\in\cal A$ with $A\cap B=\emptyset$ then
$|A\cup B|=|A|+|B|$.
\end{defn}

\begin{prop}
If $\cal A$ is an algebra of a set $\Omega$ and $|\ |$ a measure on $\cal A$
then we have:\\
(i) $A,B\in\cal A$ and $A\subseteq B$ implies that $|B\backslash A|=|B|-|A|$.
\\
(ii) If $A,B_1,\ldots ,B_n\in\cal A$ and $A\in\cup_{i=1}^n B_i$ then 
$|A|\le \sum_{i=1}^n |B_i|$.
\end{prop}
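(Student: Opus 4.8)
The plan is to derive everything from the two defining features of a measure on an algebra: finite additivity on disjoint pairs, and non-negativity. First I would note that all the sets occurring below stay inside $\cal A$, so that there is never any question of applying $|\ |$ to something non-measurable: if $A\subseteq B$ then $B\backslash A\in\cal A$ by the algebra axioms, and finite unions and intersections of members of $\cal A$ again lie in $\cal A$.

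For (i), the key observation is that $A\subseteq B$ lets us split $B$ as the disjoint union $B=A\cup(B\backslash A)$ with $A\cap(B\backslash A)=\emptyset$. Finite additivity then gives $|B|=|A|+|B\backslash A|$, and rearranging yields $|B\backslash A|=|B|-|A|$ immediately. As a corollary, which I would record separately since part (ii) leans on it, non-negativity of $|B\backslash A|$ forces monotonicity: $A\subseteq B$ implies $|A|\le|B|$.

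For (ii) I would first reduce to the case where $A$ is the whole union. Reading the hypothesis as $A\subseteq\bigcup_{i=1}^n B_i$, monotonicity gives $|A|\le|\bigcup_{i=1}^n B_i|$, so it suffices to bound $|\bigcup_{i=1}^n B_i|$ by $\sum_{i=1}^n|B_i|$. The cleanest route is to disjointify: set $C_1=B_1$ and $C_i=B_i\backslash(B_1\cup\cdots\cup B_{i-1})$ for $i\ge2$. The $C_i$ are pairwise disjoint members of $\cal A$ with $C_i\subseteq B_i$ and $\bigcup_{i=1}^n C_i=\bigcup_{i=1}^n B_i$, so finite additivity gives $|\bigcup_{i=1}^n B_i|=\sum_{i=1}^n|C_i|$, while monotonicity gives $|C_i|\le|B_i|$, and the bound follows. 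Equivalently one can induct on $n$, the inductive step being the two-set estimate $|B_1\cup B_2|=|B_1|+|B_2\backslash B_1|\le|B_1|+|B_2|$.

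I do not expect a genuine obstacle here, as both parts are routine consequences of additivity and non-negativity. The only point requiring care is the bookkeeping in (ii): arranging the disjointification so that the decomposition of the union is valid, and checking that each intermediate set $B_1\cup\cdots\cup B_{i-1}$ and its complement within $B_i$ remains in $\cal A$. Both are guaranteed by the algebra axioms recorded at the start.
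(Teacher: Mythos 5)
Your proof is correct and takes essentially the same approach as the paper: part (i) is the identical disjoint splitting $B=A\cup(B\backslash A)$, and part (ii) is the same disjointification argument, with the only cosmetic difference that you invoke monotonicity explicitly while the paper instead bounds $|A|$ by writing $|A|=\sum_{i=1}^n|D_i\cap A|$ directly. In fact your version $C_i=B_i\backslash(B_1\cup\cdots\cup B_{i-1})$ is the correct form of the paper's disjointification, whose sets $D_i=B_i\backslash B_{i-1}$ as literally written need not be pairwise disjoint nor have the same union as the $B_i$.
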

\begin{proof} We express $B$ as the disjoint union $(B\backslash A)\cup A$.
As for (ii), we have pairwise disjoint sets $D_i\in\cal A$ 
with the same union as
the $B_i$ on writing $D_1=B_1$, $D_2=B_2\backslash B_1,\ldots ,
D_n=B_n\backslash B_{n-1}$, thus $|\cup_{i=1}^n B_i|=\sum_{i=1}^n|D_i|
\le \sum_{i=1}^n|B_i|$. Now
\[|A|=|\cup_{i=1}^n (D_i\cap A)|= \sum_{i=1}^n |D_i\cap A|\le 
\sum_{i=1}^n|D_i|.\]
\end{proof}

\begin{prop} If $G$ is any group then the collection
$\cal C$ of finite unions of left cosets and right cosets of finite
index subgroups of $G$ (along with $\emptyset$) is an algebra of $G$.
\end{prop}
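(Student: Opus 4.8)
The plan is to check the three requirements in the definition of an algebra: that $G\in{\cal C}$, that ${\cal C}$ is closed under finite unions, and that ${\cal C}$ is closed under the difference operation $A\backslash B$. The first two are immediate. We have $G\in{\cal C}$ because $G$ is a coset of the finite index subgroup $G$ itself, and a finite union of elements of ${\cal C}$ is a finite union of finite unions of cosets, hence again a finite union of cosets. For the third, I would use the identity $A\backslash B=A\cap(G\backslash B)$ and instead establish that ${\cal C}$ is closed under complementation and under finite intersections; closure under $A\backslash B$ then follows at once.

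The real work lies in two building blocks. First, the complement of a single coset is in ${\cal C}$: since a finite index subgroup $H$ partitions $G$ into finitely many left cosets (resp. right cosets), the complement of one such coset is the union of the remaining ones, a finite union of cosets of $H$. Second, and this is where the genuine content sits, the intersection of any two cosets is either empty or again a single coset of a finite index subgroup. For two left cosets $aH$ and $bK$, choosing a point $x$ in the intersection (if it is nonempty) gives $aH\cap bK=xH\cap xK=x(H\cap K)$, and $H\cap K$ has finite index because the index of an intersection is bounded by the product of the indices. The case of two right cosets is symmetric. The subtle case is a left coset meeting a right coset: if $x\in aH\cap Kb$ then $aH\cap Kb=xH\cap Kx$, and a direct check shows $xh\in Kx$ exactly when $h\in x^{-1}Kx$, so that $xH\cap Kx=x(H\cap x^{-1}Kx)$. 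Here $x^{-1}Kx$ is a conjugate of $K$ and so still has finite index, whence $H\cap x^{-1}Kx$ has finite index and the intersection is a single left coset.

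With these in hand the rest is bookkeeping. A general member of ${\cal C}$ is a finite union of cosets, and distributivity turns the intersection of two such unions into a finite union of pairwise coset intersections, each of which is empty or a coset by the building block; hence ${\cal C}$ is closed under finite intersection. For complementation, De Morgan writes the complement of a finite union of cosets as the finite intersection of their complements, each complement lying in ${\cal C}$ by the first building block and the intersection staying in ${\cal C}$ by what we just showed. I expect the main obstacle to be precisely the mixed left--right intersection, since it is the only place where one must pass to a conjugate subgroup; the fact that conjugates and finite intersections of finite index subgroups remain of finite index is the one nontrivial group-theoretic input that makes the whole argument work.
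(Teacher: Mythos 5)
Your proof is correct, but it takes a genuinely different route from the paper's. You verify the algebra axioms locally: your key lemma is that the intersection of any two cosets is empty or again a coset, with the only delicate case being a left coset against a right coset, which you resolve by the conjugation identity $xH\cap Kx=x\bigl(H\cap x^{-1}Kx\bigr)$; closure under complement and intersection then follows from De Morgan and distributivity. The paper instead globalizes at the outset: given $A=\cup_i C_i$ with $C_i$ a coset of $H_i$, it intersects the $H_i$ and then passes to the normal core $R$ of that intersection, a finite-index \emph{normal} subgroup, so that left and right cosets of $R$ coincide and every set in sight becomes a union of $R$-cosets, i.e.\ the pullback of a subset of the finite quotient $G/R$; for two sets $A,B$ one intersects the two normal subgroups and all Boolean operations become trivial operations on subsets of a finite group. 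Your approach is more elementary and self-contained (no normal core needed), and it isolates exactly where the group theory enters -- conjugates and finite intersections of finite-index subgroups stay of finite index, as you note. The paper's approach proves more than the proposition states: it shows every element of $\cal C$ is a union of cosets of a single normal finite-index subgroup, and this structural fact is the workhorse for the rest of the paper (the definition of the measure in Proposition 2.5, the rigidity statement in Proposition 2.6, and the reductions to finite quotients in Section 3 all begin by expressing a measurable set as a subset of some $G/R$). So while your argument fully establishes the proposition, the paper's proof is simultaneously setting up machinery you would still have to develop separately later.
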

\begin{proof}
On taking $A=\cup_{i=1}^m C_i$, where $C_i$ is a coset (possibly left,
possibly right) of the finite index subgroup $H_i$, we can consider
$H=H_1\cap\ldots\cap H_m$ which is of finite index in $G$, and we see that
each $C_i$ is also a finite union of (left or right, as above) cosets of $H$.
Then we drop down to the core $R$ of $H$ in $G$, which is also
of finite index but is normal in $G$ too. Now each $C_i$ is just a finite
union of cosets of $R$, where $gR=Rg$ for all $g\in G$. Thus
$\cup_{i=1}^m C_i$ is just a subset of the finite quotient group
$G/R$. Next if we are given $B=\cup_{i=1}^n D_i$ which is also in 
$\mathcal C$, we have $S$ normal and finite index in $G$ (for which we
write $S\unlhd_f G$) with $\cup_{i=1}^n D_i$ a subset of $G/S$. But 
$R\cap S\unlhd_f G$ and $A,B$ are both subsets of $G/(R\cap S)$, meaning that
$A\cup B$ and $A\backslash B$ are too.
\end{proof}

\begin{prop} For any group $G$ and the algebra $\cal A$ of $G$ as defined
in Proposition 2.4, let $A\in\cal A$ consist of $r$ elements of the
quotient group $G/R$, where $R\unlhd_f G$ has index $i$. Then setting
$|A|=r/i$ gives a measure on $\cal A$ with $|G|=1$.
\end{prop}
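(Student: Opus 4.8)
The plan is to recognise that the real content of this statement is well-definedness: by Proposition 2.4 a given $A\in\cal A$ lies inside $G/R$ for some $R\unlhd_f G$, but there are many such $R$, and a priori the ratio $r/i$ could depend on the choice. So the first and main task is to show that if $A$ is simultaneously a union of $r$ cosets of $R$ (with $[G:R]=i$) and a union of $r'$ cosets of $S$ (with $[G:S]=i'$), then $r/i=r'/i'$. Once this is settled, non-negativity, the normalisation $|G|=1$, and finite additivity all drop out quickly.

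For well-definedness I would pass to a common refinement. The subgroup $R\cap S$ is again normal of finite index in $G$; write $j=[G:R\cap S]$. Since $R\cap S\le R$, each coset of $R$ is a disjoint union of exactly $[R:R\cap S]=j/i$ cosets of $R\cap S$, and likewise each coset of $S$ splits into $j/i'$ cosets of $R\cap S$. As $A$ is a union of cosets of $R$, describing it instead as a subset of $G/(R\cap S)$ shows it consists of $r\cdot(j/i)$ cosets of $R\cap S$; the same set, described through $S$, consists of $r'\cdot(j/i')$ cosets of $R\cap S$. Since these count the identical collection of cosets we get $rj/i=r'j/i'$, and cancelling $j$ gives $r/i=r'/i'$, as required.

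With the value $|A|$ now unambiguous, the measure axioms are immediate. Non-negativity holds since $r\ge 0$ and $i>0$. Taking $A=G$, which is all $i$ cosets of any $R\unlhd_f G$, gives $|G|=i/i=1$. For finite additivity, given disjoint $A,B\in\cal A$ I would use the argument of Proposition 2.4 to produce a single $R\unlhd_f G$ for which $A$ and $B$ are both unions of cosets of $R$, say of $r$ and $s$ cosets respectively; since the cosets of $R$ partition $G$, the disjointness of $A$ and $B$ in $G$ forces them to involve disjoint collections of these cosets, so $A\cup B$ is a union of $r+s$ cosets and $|A\cup B|=(r+s)/i=|A|+|B|$. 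I expect the well-definedness step to be the only genuine obstacle here; everything that follows is routine bookkeeping with coset counts.
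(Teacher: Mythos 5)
Your proof is correct and uses the same key device as the paper: passing to the normal finite-index subgroup $R\cap S$ and counting cosets there. The only difference is organisational --- you isolate well-definedness of $|A|$ as a separate first step and then verify additivity over a single common subgroup, whereas the paper folds everything into one computation with $A$ taken over $R$ and $B$ over $S$, both refined to $R\cap S$ (leaving the independence of the choice of $R$ implicit); your version is, if anything, slightly more explicit on that point.
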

\begin{proof}
On taking disjoint $A$ and $B$ with $A$ consisting of $r$ elements of $G/R$
and $[G:R]=i$ but $B$ equal to $s$ elements of $G/S$ with $[G:S]=j$, we
have that $A$ is $rk$ elements of $G/(R\cap S)$ where $k=[R:R\cap S]$ and
$B$ is $sl$ elements for $l=[S:R\cap S]$. But these subsets of $G/(R\cap S)$
must be disjoint, so $A\cup B$ is $rk+sl$ elements, giving $|A\cup B|=
rk\cdot 1/ki+sl\cdot 1/lj=|A|+|B|$.
\end{proof}
Clearly multiplying a measure by a strictly positive constant results in
another measure. We will adopt the convention that $|G|=1$ when
considering arbitrary groups, but when we are restricting ourselves
purely to finite groups we assume $|A|$ is just the number of elements
in $A$ for consistency of notation with finite sets.

\begin{prop} For any group $G$ and the algebra $\cal A$ of $G$ as above,
we have $gA,Ag\in\cal A$ with $|gA|=|Ag|=|A|$ for all $A\in\cal A$ and
$g\in G$, as well as $|A^{-1}|=|A|$.
We also have for $A,B\in\cal A$ with $A\subseteq B$ that
$|A|=|B|$ implies that $A=B$.
\end{prop}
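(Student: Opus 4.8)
The plan is to exploit the reduction already carried out in the proof of Proposition 2.4: every member of $\cal A$, and indeed any finite collection of members, can be regarded as a subset of a single finite quotient $G/R$ with $R\unlhd_f G$. Once everything lives inside one finite group, each assertion becomes an elementary fact about finite sets combined with the bijectivity of the relevant map on cosets, and the measure is read off as the number of cosets divided by the index.

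First I would fix $A\in\cal A$ and $g\in G$, and choose $R\unlhd_f G$ of index $i$ so that $A=\bigcup_{k=1}^r a_kR$ is a disjoint union of $r$ distinct cosets, whence $|A|=r/i$. Left translation gives $gA=\bigcup_k (ga_k)R$, a union of cosets of $R$; since $x\mapsto gx$ is a bijection of $G$ permuting the cosets of $R$, these remain distinct, so $gA$ consists of $r$ elements of $G/R$ and $|gA|=r/i=|A|$. For $Ag$ I would use normality of $R$ to write $a_kRg=a_kgR$, so that $Ag=\bigcup_k(a_kg)R$ is likewise a disjoint union of $r$ cosets and $|Ag|=|A|$.

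The inversion claim is the step that needs the most care, and I expect it to be the main obstacle. Here I would compute $(a_kR)^{-1}=R^{-1}a_k^{-1}=Ra_k^{-1}=a_k^{-1}R$, the last equality using normality of $R$, so that $A^{-1}=\bigcup_k a_k^{-1}R$ is again a union of left cosets of $R$. The point requiring attention is that these are distinct: if $a_k^{-1}R=a_l^{-1}R$ then $a_la_k^{-1}\in R$, and conjugating by $a_l^{-1}$ yields $a_k^{-1}a_l\in R$ by normality, i.e. $a_kR=a_lR$ and hence $k=l$. Therefore $A^{-1}$ also consists of $r$ cosets of $R$ and $|A^{-1}|=r/i=|A|$.

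Finally, for the rigidity statement I would take $A\subseteq B$ in $\cal A$ and pass to a common quotient $G/T$ with $T\unlhd_f G$ (for instance the intersection of the normal subgroups witnessing $A$ and $B$), so that $A$ and $B$ are genuine subsets of the finite set $G/T$ with $A\subseteq B$. Writing $m=[G:T]$, the measures $|A|$ and $|B|$ equal the respective numbers of $T$-cosets divided by $m$, so $|A|=|B|$ forces $A$ and $B$ to contain the same number of cosets of $T$. As one subset of a finite set cannot be properly contained in another of the same cardinality, $A\subseteq B$ together with equal counts gives $A=B$.
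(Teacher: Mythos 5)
Your proposal is correct and follows essentially the same route as the paper: both reduce everything to subsets of a single finite quotient $G/R$ (via a normal finite index subgroup) and then observe that left/right translation and inversion permute the cosets of $R$, while the rigidity claim follows from the fact that a subset of a finite set cannot be properly contained in another of the same cardinality. Your write-up simply makes explicit the coset-counting details (distinctness of translated and inverted cosets) that the paper leaves implicit.
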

\begin{proof} If $A$ is a subset of $G/R$ as above then the (left)
action of $G$ on $G/R$ by multiplication on the left or 
the (right) action of multiplication on the
right preserves the size of subsets. Similarly as $gR$ and $(gR)^{-1}=
g^{-1}R$ are both cosets of $R$, we have $|A|=|A^{-1}|$.
Moreover if $B$ is a subset of $G/S$
as above then $A$ and $B$ are both subsets of $G/(R\cap S)$ with $A$ in
$B$. Therefore $A\neq B$ means that $|B|\ge |A|+|G|/[G:R\cap S]$.
\end{proof}

\begin{defn} Given a group $G$, we say the {\bf basic measure}
$(\cal B,|\,|)$ on $G$ is the one defined in Proposition 2.5. We
say that a measure $(\cal A,||\,||)$ on $G$ is {\bf acceptable}
if it is left invariant (meaning that $gS\in{\cal A}$ with
$||gS||=||S||$ for all $S\in{\cal A}$), or right invariant,
and if it extends the basic measure, that is $\cal B\subseteq\cal A$.
\end{defn}

Note that if $G=g_1H\cup\ldots \cup g_iH$ for $H$ a subgroup of index $i$
then $||g_1H||=\ldots =||g_iH||$ so that $||\,||$ agrees with
$|\,|$ on $\cal B$ up to a constant factor (if $||\,||$ is not
identically zero). Examples of acceptable measures extending the basic
measure occur whenever $G$ is infinite amenable. Another case is that
of Hartman measure, as explained in \cite{wink}: here $G$ is a
topological group with a continuous homomorphism $\theta$ into a
compact group $C$ (such as $G$ a finitely generated group with the
profinite topology and $C$ its profinite completion). Thus $C$ has
Haar measure $\mu$ and a continuity subset $S$ is a measurable subset
of $C$ such that the topological boundary $\partial S$ has zero
measure. We then have an algebra of subsets $A$ of $G$ consisting of the
pullback $A=\theta^{-1}(S)$ of continuity subsets $S$ of $C$ and on
setting the measure of $A$ to be $\mu(S)$, we see that this is well defined.

The finite residual $R_G$ of an infinite group $G$ is defined to be the
intersection of all finite index subgroups of $G$ and we say that $G$ is
residually finite if $R_G$ is the trivial group $I$. We have that $G/R_G$
is residually finite for any group $G$, so when considering the measure on
$G$ as defined in Proposition 2.5, we may as well assume that $G$ is
residually finite, because the measurable subsets of $G$ are just the pullback
of the measurable subsets of $G/R_G$. 
Now $G/R_G$ may well be infinite, which is equivalent to 
$G$ having infinitely many subgroups of finite index, but if $G/R_G$ is
finite then our measure is nothing other than (normalised) counting measure
on $G/R_G$ and we are firmly in the realm of finite groups. In particular
if $G$ has no proper finite index subgroups then we have a trivial algebra
consisting of $G$ with $|G|=1$ and $\emptyset$ with $|\emptyset|=0$. However
many infinite groups are known to be residually finite, for instance finitely
generated linear groups over any field.

\section {Growth of subsets}

For $A,B$ subsets of a group $G$ with the basic measure $(\cal B,|\,|)$,
we let the product set
$AB=\{ab:a\in A, b\in B\}$. We have that if $A,B\in\cal B$ then so is
$AB$ because we can work in $G/(R\cap S)$ as above. We are interested in
comparing $|AB|$ with $|A|$ and $|B|$. For a finite group 
we certainly have $|AB|\le |A||B|$
but in this section we are especially interested in when $|AB|$
is small. There is much on this subject, going back to Cauchy for abelian
groups right up to recent interest in the non abelian case; see for
instance \cite{taop} and also \cite{taob} Section 2, especially 2.7 for
the non commutative case.
Here we summarise the results we
need, giving proofs when we generalise from finite groups
to the measurable case. The first is a non abelian version of \cite{taob}
Proposition 2.2.
\begin{prop}
For $A$ and $B$ measurable non empty subsets of a group $G$ with the
basic measure, we have
$|AB|=|A|$ if and only if there is a finite index subgroup $H$ of $G$
such that $A$ is a union of left cosets of $H$ and $B$ is contained in
a right coset of $H$. The subgroup $H$ can be taken to be $S_R(A)$, the
stabiliser of $A$ under right multiplication of $G$.
\end{prop}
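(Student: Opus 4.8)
The plan is to prove both implications by exploiting the distinctive feature of the basic measure recorded in Proposition 2.6: a measurable set contained in another of equal measure must coincide with it. I expect the entire argument to come down to applying this cancellation property to the translates $Ab$ sitting inside $AB$, which is the infinite analogue of the standard finite-group manipulation.

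First I would dispose of the easy direction. Suppose $A$ is a union of left cosets of a finite index subgroup $H$, so that $AH=A$, and suppose $B\subseteq Hc$ for some right coset $Hc$. Then $AB\subseteq A(Hc)=(AH)c=Ac$, so $|AB|\le |Ac|=|A|$ by the right invariance of Proposition 2.6. On the other hand, choosing any $b\in B$ (possible since $B\neq\emptyset$) gives $Ab\subseteq AB$ and hence $|A|=|Ab|\le |AB|$. Combining the two inequalities yields $|AB|=|A|$.

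For the converse, I would set $H=S_R(A)=\{g\in G:Ag=A\}$, which is clearly a subgroup. I would first check it has finite index: since $A$ is measurable it is a union of cosets of some $R\unlhd_f G$, and normality of $R$ gives $Ar=A$ for every $r\in R$, so $R\subseteq H$ and $H$ is therefore of finite index. Next, $AH=A$ holds directly from the definition of $H$, which forces $aH\subseteq A$ for each $a\in A$; thus $A$ is automatically a union of left cosets of $H$, with no appeal to the hypothesis. The actual content is placing $B$ inside a single right coset. Here I would use $|AB|=|A|$: for each $b\in B$ we have $Ab\subseteq AB$ together with $|Ab|=|A|=|AB|$, so Proposition 2.6 forces $Ab=AB$ independently of the choice of $b$. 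Consequently $Ab_1=Ab_2$ for all $b_1,b_2\in B$, giving $A(b_2b_1^{-1})=A$ and hence $b_2b_1^{-1}\in H$. Fixing $c=b_1\in B$ then yields $B\subseteq Hc$, as required.

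The step I expect to be the crux is the identity $Ab=AB$ for every $b\in B$: this is precisely where the equal-measure hypothesis is converted into set-theoretic rigidity, and it relies entirely on having the cancellation property of Proposition 2.6 available for infinite measurable sets. The remaining verifications—that $S_R(A)$ is of finite index and that $A$ is a union of its left cosets—are routine bookkeeping once a common normal finite index subgroup $R$ has been fixed with $A$, $B$ and $AB$ all measurable over $G/R$.
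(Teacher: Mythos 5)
Your proof is correct and takes essentially the same route as the paper: both directions hinge on applying the cancellation property of Proposition 2.6 to the translates $Ab\subseteq AB$, forcing $Ab=AB$ and hence placing $B$ inside a single right coset of $S_R(A)$, with $A$ automatically a union of left cosets since $AS_R(A)=A$. The only cosmetic difference is your finite-index verification (you note $R\subseteq S_R(A)$ directly, while the paper observes that the orbit of $A$ under right multiplication is finite); both are routine and equivalent.
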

\begin{proof}
First if $A=x_1H\cup\ldots\cup x_kH$ and $B\subseteq Hy$ then
$AB\subseteq AHy=Ay$. Conversely
for any choice $b_0\in B$ we have $|Ab_0|=|A|=|AB|$ and $Ab_0\subseteq AB$
so that $Ab_0=AB$ by Proposition 2.6. Thus $Bb_0^{-1}$ is in $S_R(A)$,
which has finite index in $G$ because the orbit of $A$ consists of elements
of $G/(R\cap S)$ and so is finite. Consequently $B$ is in the right coset 
$S_R(A)b_0$. But $aS_R(A)\subseteq A$ for
all $a\in A$ so $A$ is a union of left cosets of $S_R(A)$.
\end{proof}
We now look at Ruzsa distance, introduced in \cite{rz} for abelian groups
and generalised in \cite{taop} for non abelian groups.

\begin{defn} Let $A$ and $B$ be non empty measurable sets of a group $G$
with the basic measure.
The (left) Ruzsa distance is defined as
\[\mbox{d}(A,B)=\mbox{log}\ \frac{|AB^{-1}|}{|A|^{1/2}|B|^{1/2}}.\]
\end{defn}
We have $\mbox{d}(gA,gB)=\mbox{d}(A,B)$ and $\mbox{d}(Ag,Bg)=\mbox{d}(A,B)$ 
too. It also satisfies the symmetric and triangle inequality,
as shown in \cite{taop} Lemma 3.2, but fails the zero axiom in both
directions in that we can have d$(A,A)>0$ (in which case 
d$(A,B)\ge\mbox{d}(A,A)/2$ for all non empty measurable $B$)
and we can have d$(A,B)=0$ but $A\neq B$, as will be seen below.
Also we 
do not have d$(A,B)$ equal to d$(A^{-1},B^{-1})$ in general as $|AB^{-1}|$
need not equal $|B^{-1}A|$: indeed we
might not even have $|AA^{-1}|=|A^{-1}A|$. This is well known, for instance
when $A=H\cup xH$ for $H$ a subgroup of $G$ and $xHx^{-1}\neq H$. Here we
quantify this point.\\
\hfill\\
{\bf Example}: If $H$ is a subgroup of the finite group $G$ then
$AA^{-1}=H\cup xH\cup Hx^{-1}\cup xHx^{-1}$ and $A^{-1}A=H\cup HxH\cup
Hx^{-1}H$. Thus $|AA^{-1}|\le 4|H|-1$ but the basic theory of double
cosets tells us that $|HxH|=|H|^2/|H\cap xHx^{-1}|$.
For instance if $G$ is the symmetric group $Sym(2n)$ and $H$ is the subgroup
fixing $n+1,n+2,\ldots ,2n$ then we can take $x=(1,n+1)(2,n+2)\ldots 
(n,2n)$, so that $xHx^{-1}$ fixes $1,2,\ldots ,n$ with $|HxH|=|H|^2$.
Thus we have $AA^{-1}=
4(n!)-2$ but $|A^{-1}A|=(n!)^2+n!$, with $|AA^{-1}|/|A^{-1}A|$ tending to
0 as $n$ tends to infinity. 

However we can get round this
by defining right Ruzsa distance with $|B^{-1}A|=|A^{-1}B|$ 
in place of $|AB^{-1}|$.
This is also left and right invariant under multiplication by group
elements, and if required we can add both to get double Ruzsa distance
dd$(A,B)$ which will still be symmetric and satisfy the triangle inequality.
The next statement for finite groups
is Proposition 2.38 in \cite{taob} (with proof left as
an exercise).
\begin{prop}
We have d$(A,B)=0$ if and only if $A=gH$ and $B=\gamma H$ for $g,\gamma
\in G$ and $H$ some finite index subgroup of $G$.
\end{prop}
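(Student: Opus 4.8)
The plan is to handle the two directions separately, with the forward (structural) direction being the substantial one. For the easy direction, suppose $A=gH$ and $B=\gamma H$ with $H$ of finite index. Then $B^{-1}=H\gamma^{-1}$ since $H$ is a subgroup, so $AB^{-1}=gH\gamma^{-1}=g(H\gamma^{-1})$, which is a left translate of the right coset $H\gamma^{-1}$ and hence has measure $|H|$ by Proposition 2.6. As $|A|=|B|=|H|$ as well, the ratio defining d$(A,B)$ equals $1$ and its logarithm is $0$.

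For the converse, suppose d$(A,B)=0$, that is, $|AB^{-1}|=|A|^{1/2}|B|^{1/2}$. First I would record the elementary lower bound $|AB^{-1}|\ge\max(|A|,|B|)$: fixing any $b\in B$ gives $Ab^{-1}\subseteq AB^{-1}$ with $|Ab^{-1}|=|A|$, and fixing any $a\in A$ gives $aB^{-1}\subseteq AB^{-1}$ with $|aB^{-1}|=|B|$. Combined with the arithmetic--geometric mean inequality $\max(|A|,|B|)\ge|A|^{1/2}|B|^{1/2}$, the hypothesis forces both inequalities to be equalities. In particular $|A|=|B|$ and $|AB^{-1}|=|A|$.

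Next I would apply Proposition 3.1 to the pair $A$ and $B^{-1}$. Since $|AB^{-1}|=|A|$, there is a finite index subgroup $H$ (which may be taken to be $S_R(A)$) such that $A$ is a union of left cosets of $H$ while $B^{-1}$ is contained in a right coset $Hy$ of $H$; equivalently $B\subseteq y^{-1}H$ lies in a single left coset of $H$. A measure count then finishes the argument: a non-empty union of left cosets of $H$ has measure at least $|H|$, whereas a subset of a single left coset has measure at most $|H|$, so $|H|\le|A|=|B|\le|H|$ forces $|A|=|B|=|H|$. Hence $A$ is exactly one left coset $gH$, and $B$ is a subset of $y^{-1}H$ of full measure $|H|$, so $B=\gamma H$ by Proposition 2.6, both cosets being taken with respect to the same $H$.

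The main point requiring care, rather than a genuine obstacle, is the bookkeeping in transferring Proposition 3.1 from the product $AB$ to $AB^{-1}$: one must track that the right coset containing $B^{-1}$ becomes a left coset containing $B$, so that $A$ and $B$ emerge as cosets of the same subgroup on the same side, exactly matching the statement. Everything else is forced by the two equality conditions extracted above.
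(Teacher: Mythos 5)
Your proof is correct and takes essentially the same approach as the paper's: both reduce d$(A,B)=0$ to $|A|=|B|=|AB^{-1}|$, apply Proposition 3.1 to the pair $A$ and $B^{-1}$, and conclude via the measure count $|A|\ge |H|\ge |B|$ that both sets are single left cosets of $H$. The only difference is that you spell out what the paper leaves implicit, namely the easy direction and the inequality $|AB^{-1}|\ge\max(|A|,|B|)\ge|A|^{1/2}|B|^{1/2}$ that forces the equalities.
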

\begin{proof}
This implies that $|A|=|B|=|AB^{-1}|$. Therefore by Proposition 3.1 we
have that there is a finite index subgroup $H$ such that $A$ is a union of
left cosets of $H$ and $B^{-1}$ is contained in a right coset of $H$. 
This implies $|A|\ge |H|\ge |B|$ so all are equal and we must have 
$g,\gamma\in G$ such that $A=gH$ and $B^{-1}=H\gamma^{-1}$ so $B=\gamma H$.
\end{proof}
Of course if $A=gH$ is a left coset of $H$ then it is also a right coset,
but of $gHg^{-1}$. We make the following definition:
\begin{defn} We say that $A$ is a {\bf left right coset} in a group $G$
if it is both a
left coset and a right coset of the same subgroup $H$ in $G$.
\end{defn}
This is equivalent to saying we have
some $g\in G$ for which $A=gH=Hg$, 
which holds if and only if
$g$ is in the normaliser $\mbox{Norm}(H)$.

\begin{co}
We have d$(A,A)=0$ if and only if $A$ is a left coset of a finite index
subgroup. Also d$(A,A^{-1})=0$ if and only if $A$ is a left right coset
of a finite index subgroup.
\end{co}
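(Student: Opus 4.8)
The plan is to derive both equivalences by specialising the preceding Proposition~3.3, which already characterises exactly when a Ruzsa distance vanishes. The two quantities in question are the Ruzsa distances $\mbox{d}(A,B)$ for $B=A$ and for $B=A^{-1}$ respectively, and both choices of $B$ are legitimate non empty measurable sets: $A^{-1}\in\cal A$ with $|A^{-1}|=|A|$ by Proposition~2.6. So in each case I would simply substitute the relevant $B$ into Proposition~3.3 and unwind the resulting conditions.

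For the first statement, take $B=A$. Proposition~3.3 then says $\mbox{d}(A,A)=0$ if and only if there is a finite index subgroup $H$ and elements $g,\gamma\in G$ with $A=gH$ and $A=\gamma H$. These two conditions are literally the same assertion, namely that $A$ is a left coset of the finite index subgroup $H$, so this half is immediate in both directions.

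For the second statement, take $B=A^{-1}$. Proposition~3.3 gives $\mbox{d}(A,A^{-1})=0$ if and only if $A=gH$ and $A^{-1}=\gamma H$ for some finite index $H$; inverting the second equation rewrites it as $A=H\gamma^{-1}$. Thus $\mbox{d}(A,A^{-1})=0$ is equivalent to $A$ being simultaneously a left coset and a right coset of one and the same finite index subgroup $H$. The one point that needs care---and the only real content beyond substitution---is matching this with Definition~3.4, which demands a single representative $\delta$ with $A=\delta H=H\delta$. Here I would argue that a set which is both a left coset $gH$ and a right coset $H\gamma^{-1}$ of $H$ is automatically a left right coset: choosing the representative $g\in A=H\gamma^{-1}$ we may write $\gamma^{-1}=h^{-1}g$ with $h\in H$, so that $H\gamma^{-1}=Hg$ and hence $A=gH=Hg$. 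The converse is trivial, since a left right coset $A=\delta H=H\delta$ is patently a left coset, while its inverse $A^{-1}=\delta^{-1}H$ is a left coset of $H$, exactly as required by Proposition~3.3. This reconciliation of a possibly mismatched pair of representatives into the single normaliser element of Definition~3.4 is the step I expect to be the crux; everything else is a direct appeal to Proposition~3.3 together with the invariance properties recorded in Proposition~2.6.
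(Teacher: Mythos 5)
Your proposal is correct and takes essentially the same route as the paper: both arguments specialise Proposition~3.3 to $B=A$ and $B=A^{-1}$, and then observe that a set which is simultaneously a left coset $gH$ and a right coset of $H$ must be the right coset of $H$ containing $g$, namely $Hg$, so that $A=gH=Hg$ as Definition~3.4 requires. The only difference is cosmetic: you spell out the representative-matching computation ($\gamma^{-1}=h^{-1}g$) that the paper compresses into the remark that $A$ is ``a right coset of $H$ containing $g$''.
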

\begin{proof}
The first statement is immediate from Proposition 3.3
and if $A=gH$ with $A^{-1}=\gamma H$ then $A=H\gamma^{-1}$,
so $A$ is a right coset of $H$ containing $g$.
\end{proof}

We also have similar results for the right Ruzsa distance on swapping left
and right cosets. Moreover we can put these together for the double
distance.
\begin{co}
We have dd$(A,B)=0$ if and only if there exists a finite index subgroup
$H$ and $g,\gamma\in G$ with $\gamma^{-1}g\in\mbox{Norm}(H)$ such that
$A=gH$ and $B=\gamma H$.
\end{co}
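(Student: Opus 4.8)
The plan is to reduce the double statement to the two one-sided results already in hand. Writing d$'(A,B)$ for the right Ruzsa distance, so that dd$(A,B)=\,$d$(A,B)+\,$d$'(A,B)$, I would first observe that each summand is non-negative: for any fixed $b\in B$ we have $Ab^{-1}\subseteq AB^{-1}$, so $|AB^{-1}|\ge|Ab^{-1}|=|A|$ by Proposition 2.6, and symmetrically $|AB^{-1}|\ge|B|$, whence $|AB^{-1}|\ge|A|^{1/2}|B|^{1/2}$ and d$(A,B)\ge 0$. The analogous bound $|A^{-1}B|\ge\max(|A|,|B|)\ge|A|^{1/2}|B|^{1/2}$ gives d$'(A,B)\ge 0$. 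Consequently dd$(A,B)=0$ holds if and only if d$(A,B)=0$ and d$'(A,B)=0$ simultaneously, and the corollary becomes a matter of intersecting the two descriptions.

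For the forward direction I would invoke Proposition 3.3 on the vanishing of the left distance to obtain a finite index subgroup $K$ with $A=gK$ and $B=\gamma K$ as left cosets of $K$, and then the right-handed analogue of Proposition 3.3 (obtained, as the text remarks, by swapping left and right cosets throughout) on the vanishing of d$'$ to obtain a finite index subgroup $L$ of which $A$ and $B$ are right cosets. The main work is reconciling these two descriptions. Since $g=g\cdot e\in gK=A$ lies in the right coset of $L$ representing $A$, I can absorb the extra factor and rewrite $A=gK=Lg$, whence $L=gKg^{-1}$; the same argument applied to $B$ via $\gamma\in\gamma K=B$ gives $B=\gamma K=L\gamma$ and $L=\gamma K\gamma^{-1}$. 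Equating the two expressions for $L$ yields $gKg^{-1}=\gamma K\gamma^{-1}$, which rearranges to $(\gamma^{-1}g)K(\gamma^{-1}g)^{-1}=K$, that is $\gamma^{-1}g\in\mbox{Norm}(K)$; taking $H=K$ finishes this direction. I expect the only genuine subtlety to be here: recognising that the left- and right-coset subgroups need not be equal but are conjugate, and that the conjugating element is precisely forced into $\mbox{Norm}(H)$.

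For the converse, suppose $A=gH$, $B=\gamma H$ with $\gamma^{-1}g\in\mbox{Norm}(H)$. As left cosets of $H$ these give d$(A,B)=0$ by Proposition 3.3 directly. Writing $A=gH=(gHg^{-1})g$ and $B=\gamma H=(\gamma H\gamma^{-1})\gamma$ exhibits $A$ and $B$ as right cosets, and the normaliser condition is exactly what forces $gHg^{-1}=\gamma H\gamma^{-1}$, so they are right cosets of one common finite index subgroup and d$'(A,B)=0$ by the right-handed analogue. Hence dd$(A,B)=0$, completing the equivalence.
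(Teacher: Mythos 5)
Your proof is correct and takes essentially the same route as the paper: both reduce dd$(A,B)=0$ to the simultaneous vanishing of the left and right Ruzsa distances, invoke Proposition 3.3 together with its right-handed analogue, and reconcile the resulting left-coset and right-coset descriptions by conjugation, forcing $gHg^{-1}=\gamma H\gamma^{-1}$ and hence $\gamma^{-1}g\in\mbox{Norm}(H)$. The paper's write-up is merely terser, identifying the common right-coset subgroup via the observation that a set is a right coset of at most one subgroup, and leaving the non-negativity of each summand and the converse direction implicit, both of which you spell out correctly.
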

\begin{proof} 
We have finite index subgroups $H, L$ of $G$ with 
$A=gH=(gHg^{-1})g=Lg_1$ and
$B=\gamma H=(\gamma H\gamma^{-1})\gamma=L\gamma_1$. 
Now a set
can only be a right coset of two subgroups if these subgroups are equal
(by taking the coset containing the identity in each case), so
we have
$L=gHg^{-1}=\gamma H\gamma^{-1}$, with $gH=Lg$ and $\gamma H=L\gamma$.
\end{proof}
This means that although double Ruzsa distance is not a metric, it is
not so far away from being so. 
For instance it is on measurable subsets 
which are cosets of any finite index subgroup that equals its own
normaliser. Also if we restrict to subgroups, we have 
immediately from Proposition 3.3: 
\begin{co}
Ruzsa distance is a metric on the finite index subgroups of any group $G$.
\end{co}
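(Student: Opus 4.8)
The plan is to verify the metric axioms on the set ${\cal S}(G)$ of finite index subgroups of $G$. Two of the three are already available at this point: symmetry and the triangle inequality hold for $\mbox{d}$ on all non-empty measurable sets, as recorded immediately after Definition 3.2 (following \cite{taop} Lemma 3.2). Non-negativity will then be automatic, since $0=\mbox{d}(H,H)\le\mbox{d}(H,K)+\mbox{d}(K,H)=2\,\mbox{d}(H,K)$. Thus the only genuine content to check is that $\mbox{d}$ separates points, namely that for finite index subgroups $H,K$ we have $\mbox{d}(H,K)=0$ if and only if $H=K$.

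For the easy direction I would observe that when $H=K$ is a subgroup we have $HH^{-1}=H$, so $|HH^{-1}|=|H|$ and hence $\mbox{d}(H,H)=\mbox{log}\,(|H|/|H|)=0$. The converse is where Proposition 3.3 does all the work. Suppose $\mbox{d}(H,K)=0$ with $H,K$ finite index subgroups; applying Proposition 3.3 directly yields a single finite index subgroup $L$ together with $g,\gamma\in G$ such that $H=gL$ and $K=\gamma L$. The final step is the elementary observation that a left coset which is itself a subgroup must coincide with the subgroup of which it is a coset: since the identity lies in $H=gL$ we get $g\in L$ and so $H=gL=L$, and the same reasoning gives $K=\gamma L=L$. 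Therefore $H=L=K$.

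I do not expect any real obstacle, since the statement is an immediate specialisation of Proposition 3.3 from arbitrary measurable sets to subgroups. The single point that deserves a moment's care is that Proposition 3.3 produces one and the same subgroup $L$ for both $H$ and $K$; once each of $H$ and $K$ is forced to equal $L$, they are forced to equal one another, which is exactly what the identity-of-indiscernibles axiom requires.
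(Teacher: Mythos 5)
Your proof is correct and takes essentially the same route as the paper: the paper derives this corollary ``immediately from Proposition 3.3,'' and your argument is exactly that specialisation, with symmetry and the triangle inequality quoted from the remarks after Definition 3.2 and the zero axiom settled by noting that a left coset containing the identity must equal the subgroup itself. The only detail you add beyond the paper's one-line justification is the (correct) observation that Proposition 3.3 supplies a single subgroup $L$ for both $H$ and $K$, which is what forces $H=K$.
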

Here it does not matter if we use left or right Ruzsa distance, which will
be equal, or double Ruzsa distance which is twice each of these.

We now consider growth in groups, which has been much studied for finite
subsets. The idea is to take any measurable set $A$
and consider the sizes $A,A^2,A^3$ and so on.
This occurs in the theory of covering
groups by conjugacy classes, which involves taking a group $G$, usually
a finite simple group, and a non trivial conjugacy class $C$ so as to
determine the smallest value of $n$ such that $C^n=G$. The monograph
\cite{arstherz} discusses the theory in this area, and generalises $C$
to any subset of an arbitrary finite group in Chapter 3, obtaining
results not unlike those that follow.

\begin{thm}
Given a group $G$ with the basic measure $(\cal B,|\,|)$
and non empty $A\in\cal B$, consider the sequence of
measurable sets $A^2,A^3,\ldots $. We have:\\
(i) There exists $n$ such that $|A^n|=|A^{n+1}|$ and this implies that
$|A^{n+1}|=|A^{n+2}|=\ldots$.\\
(ii) We have $|A^n|=|A^{n+1}|$ if and only if $A^n$ is equal to a left right
coset $gH$.\\
\end{thm}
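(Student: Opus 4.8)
The plan is to treat the two parts together, since the assertion in (i) that equality persists is most naturally proved alongside (ii). Throughout I would fix a normal subgroup $R\unlhd_f G$ with $A\subseteq G/R$ (possible as $A\in\mathcal B$), so that every power $A^m$ is a subset of the finite group $G/R$ and hence $|A^m|\in\{1/i,2/i,\dots,1\}$ where $i=[G:R]$. The one routine observation I would record first is monotonicity: choosing any $a_0\in A$ (which exists as $A\neq\emptyset$), we have $a_0A^m\subseteq A^{m+1}$ with $|a_0A^m|=|A^m|$ by Proposition 2.6, so $|A^m|\le|A^{m+1}|$. A non-decreasing sequence taking values in a finite set is eventually constant, which already yields the existence of $n$ with $|A^n|=|A^{n+1}|$ in (i).

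For the persistence claim in (i) I would prove the implication $|A^n|=|A^{n+1}|\Rightarrow|A^{n+1}|=|A^{n+2}|$ and induct. Assuming $|A^n\cdot A|=|A^n|$, Proposition 3.1 applied to the pair $(A^n,A)$ gives $H=S_R(A^n)$ with $A^n=A^nH$ a union of left cosets of $H$ and $A\subseteq Hc$ for some $c$; then $A^{n+1}=A^nA\subseteq A^nHc=A^nc$, and equality of measures forces $A^{n+1}=A^nc$ by Proposition 2.6. Setting $H'=c^{-1}Hc=S_R(A^{n+1})$, I would check the two hypotheses needed to rerun Proposition 3.1 on $(A^{n+1},A)$: that $A^{n+1}$ is a union of left cosets of $H'$ (immediate from $A^{n+1}=\bigcup (x_ic)H'$) and that $A$ lies in a single right coset of $H'$. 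The latter is the only small computation, namely $Hc\,c^{-1}Hc=Hc$, so $Hc$ is right $H'$-invariant and, having measure $|H'|$, is a single right coset of $H'$ containing $A$. Proposition 3.1 then delivers $|A^{n+2}|=|A^{n+1}|$, completing the induction, and in particular $|A^{n+k}|=|A^n|$ for all $k\ge0$.

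For the converse direction of (ii) I would argue from the coset side. If $A^n=gH=Hg$ is a left right coset then, since $Hg=gH$, its square satisfies $(A^n)^2=gHgH=g^2H$, so $|A^{2n}|=|H|=|A^n|$; monotonicity then squeezes $|A^n|\le|A^{n+1}|\le\cdots\le|A^{2n}|=|A^n|$, giving $|A^{n+1}|=|A^n|$.

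The forward direction of (ii) is where the real content lies, and I expect it to be the main obstacle: from $|A^{n+1}|=|A^n|$ one must upgrade $A^n$ from a mere union of left cosets of $H=S_R(A^n)$ to a single left right coset. The idea is to feed the equality back into itself. By the persistence just established $|A^{2n}|=|A^n|$, so applying Proposition 3.1 to the pair $(A^n,A^n)$ forces the right-hand factor $A^n$ into a single right coset $Hd$ of $H$. But $A^n$ is also right $H$-invariant, hence a disjoint union of $m\ge1$ left cosets of $H$ with $|A^n|=m|H|$; since $A^n\subseteq Hd$ gives $m|H|\le|H|$, we get $m=1$, whence $A^n=x_1H=Hd$ is simultaneously a left and a right coset of the same subgroup $H$, i.e. a left right coset, which is exactly (ii). The delicate points are the bookkeeping of left versus right cosets when invoking Proposition 3.1, and the realization that taking both arguments equal to $A^n$ is precisely what pins $A^n$ inside a single right coset.
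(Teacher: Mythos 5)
Your setup, the monotonicity argument, the converse half of (ii), and your forward half of (ii) are all sound as ideas; but the persistence step of (i), on which your forward half of (ii) depends (you need $|A^{2n}|=|A^n|$, not merely $|A^{n+1}|=|A^n|$), contains a genuine error. You claim that since $Hc\cdot H'=Hc$ (where $H'=c^{-1}Hc$) and $|Hc|=|H'|$, the set $Hc$ is a single \emph{right} coset of $H'$. That inference is backwards: right $H'$-invariance, $XH'=X$, makes $X$ a union of \emph{left} cosets of $H'$ (each $x\in X$ gives $xH'\subseteq X$), so with $|X|=|H'|$ you get a single left coset, and indeed $Hc=c(c^{-1}Hc)=cH'$. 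For $Hc$ to also be a right coset of $H'$ you would need $cH'=H'c$, i.e. $c\in\mbox{Norm}(H)$, which is not known at this stage --- it is essentially what part (ii) is trying to prove (a posteriori every element of $A$ does normalise $H=S_R(A^n)$, as the proof of Corollary 3.9 shows, but assuming it here is circular). So Proposition 3.1 cannot be rerun on the pair $(A^{n+1},A)$ as you propose: you have $A$ inside a left coset of $H'$, while the proposition requires a right coset.

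The repair is to abandon the rerun of Proposition 3.1 and instead use that powers of $A$ can be multiplied on either side: from $A^{n+1}=A^nc$ you get
\[
A^{n+2}=A\cdot A^{n+1}=(AA^n)c=A^{n+1}c=A^nc^2,
\]
whence $|A^{n+2}|=|A^n|$ by right invariance of the measure, and induction gives $|A^{n+k}|=|A^n|$ for all $k\ge 0$. This is exactly the paper's move (phrased there as $A^{n+2}\subseteq A^n\alpha^2$ via $aA^n\subseteq A^n\alpha$ for all $a\in A$). With persistence repaired this way, the remainder of your argument does go through, and it is worth noting that your forward direction of (ii) --- applying Proposition 3.1 to the pair $(A^n,A^n)$ and counting measure to force a single left coset that equals a single right coset --- is a genuinely different and more self-contained route than the paper's, which passes through $\mbox{d}(A,A^{-1})=0$ and the characterisation of left right cosets in Corollary 3.5.
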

\begin{proof}
(i) If we express $A$ as a subset of $G/R$ then so is
$A^2,A^3,\ldots$ so we will assume that $G$ is a finite group. As
$A\neq\emptyset$, we have that $aA^n\subseteq A^{n+1}$ for any $a\in A$
so $|A^n|$ is a non decreasing sequence with values in
$\{0,1/[G:R],2/[G:R],\ldots ,1\}$ which must eventually
stabilise. But if $|A^n|=|A^{n+1}|$ then, on taking a particular
$\alpha\in A$, we have $|A^n|=|A^n\alpha|=|A^{n+1}|$ with 
$A^n\alpha\subseteq A^{n+1}$ so $A^n\alpha=A^{n+1}$ by Proposition 2.6.
Consequently we have $aA^n\subseteq A^n\alpha$ for all $a\in A$. Thus
for $a_1,a_2\in A$ we have $a_1a_2A^n\subseteq a_1A^n\alpha\subseteq 
A^n\alpha^2$, so $A^{n+2}\subseteq A^n\alpha^2$ and $|A^{n+2}|=|A^n|=
|A^{n+1}|$.

For (ii), first suppose that $|A|=|A^2|$, in which case we have that the
Ruzsa distance d$(A,A^{-1})=0$.
Therefore $A$ is a left right coset by Corollary 3.5. The general
result follows on setting $S=A^n$ because $|A^n|=|A^{n+1}|$ implies that
$|A^n|=|A^{2n}|$ from (i), thus $|S|=|S^2|$ giving $S=gH=Hg$.
Conversely if $A^n$ is a left right coset $gH=Hg$ then $A^{2n}=gH\cdot Hg
=gHg=Hg^2$ so $|A^n|=|A^{2n}|$. As $|A^n|$ is increasing, we have
$|A^n|=|A^{n+1}|$.
\end{proof}

This gives us a complete answer as to which measurable subsets $A$ of
$G$ ``expand to become the whole group''; meaning that there exists $n>0$
such that $A^n=G$ (and then of course $A^m=G$ for all $m\ge n$).
\begin{co}
Given a non empty measurable subset $A\in\cal B$ for $G$ equipped
with the basic measure $(\cal B,|\,|)$,
we have that $|A^n|<|G|$
for all $n\in\N$ if and only if $A$ is a subset of a left right coset of
a proper finite index subgroup $H$ of $G$.
\end{co}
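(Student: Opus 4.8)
The plan is to read everything off Theorem 3.7, whose parts (i) and (ii) already pin down when the sequence $|A^n|$ stops growing and exactly what $A^n$ looks like at that point; the whole task is to transfer the coset structure of $A^n$ back to $A$.

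For the easy direction, suppose $A\subseteq gH=Hg$ with $H$ proper of finite index. First I would record that $gH=Hg$ gives $(gH)^k=g^kH$ for all $k$ by induction, since $(gH)^{k+1}=g^kHgH=g^k(Hg)H=g^k(gH)H=g^{k+1}H$. Hence $A^n\subseteq(gH)^n=g^nH$, so $|A^n|\le|g^nH|=1/[G:H]<1=|G|$ for every $n$, as required.

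For the converse, suppose $|A^n|<|G|$ for all $n$. By Theorem 3.7(i) the non-decreasing sequence $|A^n|$ eventually stabilises, so there is an $n$ with $|A^n|=|A^{n+1}|$, and then Theorem 3.7(ii) forces $A^n=gH=Hg$ for a finite index subgroup $H$ with $g\in\mbox{Norm}(H)$. Since $|A^n|<|G|=1$ we have $[G:H]>1$, so $H$ is proper. To pull this back to $A$, fix $a_0\in A$; for each $a\in A$ we have $aA^n\subseteq A^{n+1}$ with $|aA^n|=|A^n|=|A^{n+1}|$, so Proposition 2.6 gives $aA^n=A^{n+1}=a_0A^n$. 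Thus $a_0^{-1}a$ fixes $A^n=gH$ under left multiplication, and as this left stabiliser is $gHg^{-1}=H$, we get $a\in a_0H$. Hence $A\subseteq a_0H$.

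This already places $A$ in a left coset, and the main obstacle is to upgrade $a_0H$ to a left right coset, that is, to show $a_0\in\mbox{Norm}(H)$. I would do this by invoking Theorem 3.7(ii) one exponent higher: the measure has stabilised, so $|A^{n+1}|=|A^{n+2}|$ and $A^{n+1}$ is itself a left right coset, say of a subgroup $H'$. But the computation above identifies $A^{n+1}=a_0A^n=a_0gH$ as a left coset of $H$, and a left coset determines its subgroup uniquely (for any $x$ in it the subgroup is $x^{-1}A^{n+1}$), so $H'=H$ and $A^{n+1}$ is a left right coset of $H$. Therefore $a_0gH=Ha_0g$, forcing $a_0g\in\mbox{Norm}(H)$, and together with $g\in\mbox{Norm}(H)$ this gives $a_0\in\mbox{Norm}(H)$. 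Then $a_0H=Ha_0$ is a left right coset of the proper finite index subgroup $H$ containing $A$, which completes the argument.
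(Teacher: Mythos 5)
Your proof is correct, and at the strategic level it coincides with the paper's: both directions rest on Theorem 3.8 (which you cite as 3.7), with the converse running stabilisation $\Rightarrow$ $A^n=gH=Hg$ with $H$ proper $\Rightarrow$ $A$ lies in a single left coset of $H$ $\Rightarrow$ that coset is also a right coset. The two middle steps are executed differently, though. To trap $A$ in a coset, the paper takes any $b\in A^{n-1}$ and observes that $bA\subseteq A^n=gH$ forces $A\subseteq b^{-1}gH$, which equals $aH$ for any $a\in A$; you instead fix $a_0\in A$ and use Proposition 2.6 to get $aA^n=A^{n+1}=a_0A^n$ for every $a\in A$, so that $a_0^{-1}a$ lies in the left stabiliser $gHg^{-1}=H$ of $A^n$ --- a slightly longer but clean route that sidesteps the exponent $n-1$ (which needs the convention $A^0=\{e\}$ when $n=1$). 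For the normaliser upgrade, the paper computes $A^{n+1}$ two ways, as $agH$ and as $gHa$, and substitutes $x=g^{-1}ag$ to read off $xH=Ha$ and hence $aH=Ha$; you instead invoke Theorem 3.8(ii) a second time at exponent $n+1$, identify the subgroup of the left right coset $A^{n+1}$ with $H$ via uniqueness of the subgroup attached to a coset, and deduce $a_0g\in\mbox{Norm}(H)$, hence $a_0\in\mbox{Norm}(H)$ since $g\in\mbox{Norm}(H)$. Both variants exploit the same underlying fact --- that $A^{n+1}$ is simultaneously a left coset and a right coset of $H$ --- so nothing is gained or lost in generality; your second appeal to the theorem simply replaces the paper's short explicit computation.
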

\begin{proof}
If $A\subseteq gH=Hg$ for $H$ a proper finite index subgroup
then $A^n\subseteq g^nH=Hg^n$ which is still a coset
of $H$, thus never equal to $G$. Conversely if $|A^n|=|A^{n+1}|<|G|$, so that
$A^n=gH=Hg$ for $H$ a proper subgroup of $G$, then take any $b\in A^{n-1}$.
For all $a\in A$ we have $ba\in A^n=gH$ so that $A$ is contained in the
coset $b^{-1}gH$ which must be equal to $aH$ for any $a\in A$.
Now $A^n=gH$ so $A^{n+1}=agH=gHa$, meaning that (for $x=g^{-1}ag$)
we have $xH=Ha$, with $a\in xH$ so $aH=Ha$.
\end{proof}

One might think that any generating set for $G$ expands to become all
of $G$ but Corollary 3.9 illustrates a dichotomy even
within finite groups in
that some groups $F$ can have a set $S$ which generates $F$ but such that
$S^n$ is a proper subset of $F$ for all $n\in\Z$. Certainly if
$\langle S\rangle=F$ then $\cup_{n\in\Z} S^n=F$ (where we take
$S^0=\{e\}$) and even $\cup_{n\in\N}S^n=F$ because $F$ is finite, so that
$S^{-1}\subseteq S^{|F|-1}$. Moreover if $e\in S$ then $S^m\subseteq S^n$
so that $F=S^N$ for some $N>0$. But this need not be true for arbitrary
generating sets if $F$ is not perfect (meaning that its commutator subgroup
$F'$ is a proper subgroup of $F$). For instance Theorem 4.10 of
\cite{nath} claims that if $G$ is a finite abelian group and $A$ a subset
that generates $G$ then there exists $N>0$ with $A^N=G$. In fact any finite
abelian group has such an $A$ with $A^N\neq G$ for all $N$. The problem
is the first three lines: we cannot assume without loss of generality
that the identity is in $A$.

\begin{thm}
If $F$ is a finite group then there exists a generating set $S$ for $F$ with
$S^N\neq F$ for all $N\in\Z$ if and only if $F$ is not perfect. There
exists a symmetrised generating set $S$ (one where $S^{-1}=S$) 
with $S^N\neq F$ for all $n\in\Z$ if and only if
$F$ surjects to the cyclic group $C_2$.
\end{thm}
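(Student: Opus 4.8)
The plan is to handle the two biconditionals by constructing explicit generating sets for the ``if'' directions and by extracting a single structural lemma for the ``only if'' directions.

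I would dispose of the constructive directions first. If $F$ is not perfect then $F/F'$ is a nontrivial finite abelian group, so there is a surjection $\theta\colon F\to C_n$ with $n>1$; write $C_n=\langle t\rangle$ and take $S=\theta^{-1}(t)$. Since $\theta(S)=\{t\}$ we have $\theta(S^N)=\{t^N\}$, so $S^N\subseteq\theta^{-1}(t^N)$, a single coset of $\ker\theta$ and hence a proper subset of $F$; the same holds for $S^0=\{e\}$ and, via $S^{-1}=\theta^{-1}(t^{-1})$, for negative exponents, so $S^N\neq F$ for all $N\in\Z$. That $S$ generates $F$ follows because, writing $S=g\ker\theta$, we have $(gk_1)(gk_2)^{-1}\in\ker\theta$ for all $k_i$, so $\langle S\rangle\supseteq\ker\theta$, while $g\in S$ maps to a generator of $C_n$; thus $\langle S\rangle=F$. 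For the symmetrised statement, when $F\to C_2$ exists I would take $S=\theta^{-1}(1)$ with $C_2=\{0,1\}$: this is symmetric because $1=-1$ in $C_2$, and the argument above applies unchanged.

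The heart of the matter is a structural lemma that drives both converses. Given any generating set $S$ of the finite group $F$, the subsets $S,S^2,S^3,\dots$ take only finitely many values, so by pigeonhole $S^{p}=S^{q}$ for some $p<q$; multiplying by powers of $S$ yields $S^{n+d}=S^{n}$ for all $n\ge p$ with $d=q-p$. Hence $\{S^n:n\ge p\}$ is a finite cyclic semigroup under set multiplication, and I would locate its idempotent $E=S^{m_0}$ and check that $E\cdot E=E$ forces $E$ to be a subgroup $H\le F$: for $x\in E$ the map $y\mapsto xy$ permutes the finite set $E$, so $e\in E$ and $E$ is closed under inverses. Every element of this cyclic group of sets is then a single coset of $H$; in particular $S\cdot H=S^{m_0+1}$ is one coset, forcing $S\subseteq gH$, and symmetrically $H\cdot S=S^{m_0+1}$ is one coset on the right, so $gH=Hg$ and $g$ normalises $H$. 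Consequently $H\unlhd\langle g,H\rangle=\langle S\rangle=F$, and $F/H$ is cyclic, generated by the image of $g$. The payoff is that $S^N=F$ for some $N$ if and only if $H=F$, i.e.\ if and only if the cyclic quotient $F/H$ is trivial.

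With the lemma in hand both converses are quick. If $F$ is perfect it has no nontrivial abelian quotient, so $F/H$ is trivial, $H=F$, and every generating set satisfies $S^N=F$ for some $N$; hence a generating set with $S^N\neq F$ for all $N$ exists only when $F$ is not perfect. For the symmetrised case, suppose $S=S^{-1}$ generates $F$ with $S^N\neq F$ for all $N$, so that $H\neq F$; then $S\subseteq gH$ gives $S^{-1}\subseteq(gH)^{-1}=g^{-1}H$ (using $H\unlhd F$), and since $S=S^{-1}$ is nonempty the cosets coincide, $gH=g^{-1}H$, so $g^2\in H$. Thus the generator $gH$ of the cyclic group $F/H$ has order dividing $2$, and as $F/H$ is nontrivial it equals $C_2$, giving a surjection $F\to C_2$. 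I expect the main obstacle to be the lemma itself, specifically verifying that the idempotent of the cyclic semigroup is a genuine normal subgroup and that $S$ is confined to a single one of its cosets; once this ``limit coset'' picture is secured the reductions to non-perfectness and to $C_2$ are immediate, and the bookkeeping over $N\in\Z$ (negative exponents and $N=0$) is routine from the single-coset containment.
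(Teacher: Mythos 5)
Your proof is correct, and while its endgame coincides with the paper's, the route to the key structural fact is genuinely different. The paper gets the theorem almost for free from its Corollary 3.9 (a generating set $S$ with all powers proper satisfies $S\subseteq gH=Hg$ for a proper subgroup $H$), which in turn rests on the Ruzsa-distance machinery of Section 3: Proposition 3.1 (the case $|AB|=|A|$ forces coset structure), Corollary 3.5 ($\mathrm{d}(A,A^{-1})=0$ if and only if $A$ is a left right coset) and Theorem 3.8 (stabilisation of $|A^n|$). You instead re-derive this structural input from scratch by a classical finite-semigroup argument: the powers $S^n$ are eventually periodic, the periodic part is a finite cyclic group of sets whose idempotent $E=S^{m_0}$ is a genuine subgroup $H$, and every element of that group, in particular $SH=S^{m_0+1}$ and $HS$, is a single left and right coset of $H$, giving $S\subseteq gH=Hg$. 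The one step you flag but leave compressed, that each element of the periodic part is a single coset, closes exactly as you expect: if $S^nS^m=H$ then fixing $y\in S^m$ gives $S^n\subseteq Hy^{-1}$, while $HS^n=S^n$ makes $S^n$ a union of right cosets of $H$, so $S^n=Hy^{-1}$. From that point on --- $\langle g,H\rangle=F$, hence $H\unlhd F$ with $F/H$ cyclic; the constructive direction via a coset of the kernel of a surjection $F\to C_n$; and the symmetric case via $gH\cap g^{-1}H\neq\emptyset$ forcing $g^2\in H$ and $[F:H]=2$ --- your argument and the paper's are essentially identical. What each approach buys: the paper's is shorter in context because the measure/Ruzsa framework is already built and applies verbatim to infinite groups carrying the basic measure, whereas yours is self-contained and elementary, needing only pigeonhole on subsets of a finite group and the fact that an idempotent subset is a subgroup.
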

\begin{proof}
Let $\langle S\rangle=F$ and consider the sequence $|S|,|S^2|,|S^3|,\ldots$.
Either we have some $n$ with $|S^n|=|F|$ or some $n$ with $|S^n|=|S^{n+1}|
<|F|$ but the latter occurs exactly when $S\subseteq gH=Hg$ for $H$ a proper
subgroup of $F$ by Corollary 3.9. But $S$ is a generating set contained in
$\langle g,H\rangle$ so we must have $\langle g,H\rangle=F$. Now 
$gHg^{-1}=H$ means that $H$ is normal in $F$ with $F/H$ non trivial and
cyclic. Thus if $F$ has no non trivial abelian quotients then such a
generating set $S$ cannot exist, whereas if $F$ is not perfect we have
$H\unlhd F$ with $F/H$ cyclic and non trivial. Thus we can take $S=gH$
where $gH$ generates $F/H$.

Now suppose that $F$ has a subgroup $H$ of index 2 then the non trivial
coset $S=gH=Hg$ generates $F$ and is symmetric, but $S^n$ is either
$H$ or $gH$. Conversely if we have $S=S^{-1}$ with $\langle S\rangle=F$
but $S^n\neq F$ for any $n\in\N$ then again
$S\subseteq gH=Hg$ for $H$ a proper
subgroup of $F$. But now $\langle g,H\rangle=F$ with
$H\unlhd F$ and $S^{-1}\subseteq Hg^{-1}=g^{-1}H$. As $gH\cap g^{-1}H=
\emptyset$
unless $gH=g^{-1}H$, we have $g^2\in H$ with $[F:H]=2$. 
\end{proof}
Here is another description of the sets in Corollary 3.9.
\begin{co}
Given a non empty measurable subset $A$ of $G$, we have that $|A^n|<|G|$
for all $n\in\N$ if and only if $A$ is a subset of a coset of a proper
normal subgroup of $G$, or $A$ is contained in a maximal subgroup
of $G$. If $G$ is perfect then the latter always occurs. 
\end{co}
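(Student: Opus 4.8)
The plan is to reduce everything to Corollary 3.9, which already tells us that $|A^n|<|G|$ for all $n$ is equivalent to $A$ being contained in a left right coset $gH=Hg$ of a proper finite index subgroup $H$, where $g\in\mbox{Norm}(H)$. The whole task is to re-express this single condition as the disjunction of the two stated alternatives. The key object is the subgroup $K=\langle g,H\rangle$: since $g$ normalises $H$ we have $H\unlhd K$ and $K/H=\langle gH\rangle$ is cyclic, generated by the image of $g$. Note also that $K\supseteq H$ has finite index in $G$ and that $A\subseteq gH\subseteq K$.

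For the forward direction I would split on whether $K=G$. If $K=G$ then $H\unlhd G$, so $A\subseteq gH$ lies in a coset of the proper normal subgroup $H$, giving the first alternative. If $K\neq G$ then $K$ is a proper subgroup of finite index, and I claim it lies in a maximal subgroup: setting $N=\mbox{core}_G(K)$, which is normal of finite index, the quotient $G/N$ is finite, so $K/N$ is a proper subgroup of $G/N$ and is therefore contained in some maximal subgroup $\bar M$ of $G/N$; its preimage $M$ in $G$ is then maximal in $G$, since any subgroup strictly between $M$ and $G$ contains $N$ and so corresponds to a subgroup strictly between $\bar M$ and $G/N$, forcing it to be $G$. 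Then $A\subseteq K\subseteq M$, giving the second alternative. This passage to a maximal subgroup in a possibly infinite $G$ is the step needing the most care, and the normal core $N$ is exactly what makes the correspondence argument go through; everything else is bookkeeping.

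For the converse I would first note that measurability forces finite index: a non empty $A\in\cal B$ is a union of cosets of a single finite index subgroup $R$, hence contains a full coset $gR$, so if $A$ lies inside a subgroup $U$ then $gR\subseteq U$ forces $g\in U$ and thus $R\subseteq U$, making $U$ of finite index. Therefore, if $A\subseteq gN$ for $N$ proper normal then $N$ is finite index and $gN=Ng$ is a left right coset, while if $A\subseteq M$ for $M$ maximal then $M$ is finite index and $M=eM=Me$ is a left right coset; in either case Corollary 3.9 yields $|A^n|<|G|$ for all $n$. Finally, for the perfect case I would revisit the dichotomy: if $G$ is perfect then $K=G$ is impossible, since it would make $G/H$ a non trivial cyclic, hence abelian, quotient of a perfect group. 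Thus only the case $K\neq G$ can arise, and so $A$ is always contained in a maximal subgroup, which is the asserted conclusion.
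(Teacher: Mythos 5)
Your proof is correct, and its skeleton --- reduce to Corollary 3.9, split into a ``coset of a proper normal subgroup'' case and a ``maximal subgroup'' case, and rule out the first case for perfect $G$ because a non-trivial cyclic quotient is abelian --- is the same as the paper's. The organisation differs in ways worth noting. The paper dichotomises on whether $A$ generates $G$: if $\langle A\rangle=G$ it quotes the proof of Theorem 3.10 to get $A\subseteq gH$ with $H\unlhd G$ proper and $G/H$ cyclic; if not, it simply asserts that $A$ lies in a maximal subgroup $L$. You instead dichotomise on whether $K=\langle g,H\rangle$ equals $G$, which comes to the same thing, but you then justify the passage to a maximal subgroup via the normal core of $K$. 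That justification is not a formality: in an arbitrary infinite group a proper subgroup need not lie in any maximal subgroup (the Pr\"ufer group $\Z(p^\infty)$ has none), and what legitimises the step here is precisely that measurability forces $K$ (or $\langle A\rangle$) to have finite index --- a point the paper's proof glosses over and your core argument makes explicit. You also write out the converse, which the paper omits entirely; your route through Corollary 3.9 works, though it can be shortened by noting that $A^n\subseteq g^nN$ (respectively $A^n\subseteq M$) is a proper measurable subset of $G$, hence has measure less than $|G|$ by Proposition 2.6, with no finite-index bookkeeping needed. One small nick: your finite-index lemma is stated for $A$ inside a subgroup $U$, so to conclude that $N$ has finite index from $A\subseteq gN$ you should first translate (pick $a\in A$, note $gN=aN$, and apply the lemma to the measurable set $a^{-1}A\subseteq N$); this is a one-line fix, not a gap.
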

\begin{proof}
Either $A$ is a generating set or is contained in
a maximal subgroup $L$ of $G$. If the former happens then by the
proof of Theorem 3.10, we have $A\subseteq gH$, where $H$ is a proper normal
subgroup of $G$ and $G/H$ is cyclic.
\end{proof}

\section{Doubling and product free sets}

If $A$ is a finite subset of a group $G$ then there has been much study
on how the size of $A^2$ can vary with that of $A$, 
as we shall see later. Here we will concentrate on those results for finite
groups which provide applications for measurable groups.

In \cite{bherz} a subset $B$ of the finite
group $G$ is called a basis for $G$ if $B^2=G$. The aim is then to find
a basis for a given group which is as small as possible. However we will
look at the two related ideas of finding subsets $A$ of $G$ which are as 
large as possible without being a basis for $G$, and of having a lower
bound for $|A|$ which ensures that it will be a basis. The next
proposition is an old and well known fact.
\begin{prop}
If $G$ is a group of order $n$ and $A$ is a subset of $G$ with 
$|A|>n/2$ then $A^2=G$.
\end{prop}
\begin{proof}
For any $g\in G$ we have $|gA^{-1}|+|A|>n$ so $gA^{-1}$
and $A$ meet, giving $g\in A^2$.
\end{proof}
However we will now see that this value is best possible. For groups of odd
order this is straightforward.
\begin{prop}
If $G$ is a group of order $n=2k+1$ then we have a set $A$ of size $k$
such that $A^2\neq G$.
\end{prop}
\begin{proof}
Pair off each element with its inverse, leaving out the identity $e$. On
taking $A$ to be one element from each of these pairs, we have that
$e\not\in A^2$.
\end{proof}
In fact for some very small groups $G$ it can happen that no set $A$
of size $k=(|G|-1)/2$ or $k=|G|/2$ (for $|G|$ odd or even respectively)
has $A^2=G$.
An argument in \cite{bherz} shows that they always
exist for $k\ge 6$. Smaller values of $k$ can then be quickly tested on
computer, giving that such subsets do not exist precisely 
for the cyclic groups of orders 1 to 7 and abelian groups of orders 
4 and 8.


If rather than considering $|A^2|$ in terms of $|A|$, we consider subsets
$A,B\subseteq G$ and look at how $|AB|$ can depend on $|A|$ and $|B|$ then
there are many known results. First note that Proposition 4.1 generalises
immediately to $AB=G$ if $|A|+|B|>|G|$. Next there is the 
Cauchy-Davenport theorem stating that in the cyclic group $G$ of order $p$
for $p$ prime, we have
$|AB|\ge\mbox{min}(p,|A|+|B|-1)$ for $A$ and $B$ non empty subsets of $G$.
This was generalised by Kneser in
1953 to abelian groups $G$ where he showed that if $|A|+|B|\le |G|$ then
either $|AB|\ge|A|+|B|$ or $|AB|=|A|+|B|-|H|$ for $H$ a proper subgroup
of $G$ and $A$, $B$ are both unions of cosets of $H$. In particular if
$G$ is abelian and of odd order $2k+1$ with $|A|=|B|=k$ but $|AB|<2k$
then $|H|$ must divide $k$ so is equal to 1, giving $|AB|=2k-1$. 
In the case when $G$ is a non abelian group, Kemperman showed in \cite{kp56} 
that for any $a\in A$ 
and $b\in B$, there exists a subgroup $H(a,b)$ of $G$ such that 
$aHb\subseteq AB$ and $|AB|\ge |A|+|B|-|H|$.

If we now look at $|AB|$ when $A$ varies over subsets of a fixed size $r$ 
and $B$ over size $s$ then this is covered in a series of papers by
Eliahou and Kervaire. In particular if $G$ is any finite group and
$1\le r,s\le n=|G|$ then
we can define $\mu_G(r,s)$ to be the minimum of $|AB|$ for $A,B\subseteq G$
with $|A|=r$ and $|B|=s$. In \cite{ekp} with Plagne it was shown that
\begin{equation}
\mu_G(r,s)=\mbox{min}_{d|n}\left(\lceil r/d\rceil+
\lceil s/d\rceil-1\right)d
\end{equation}
when $G$ is abelian. In \cite{ekeu} it is shown that
if $G$ is solvable then for all values of $r$ and $s$ above with $r\le s$
we have $A\subseteq B$ with $|A|=r$ and $|B|=s$ but $|AB|\le |A|+|B|-1$.
In particular if $|G|=2k+1$ and $r=s=k$, we can invoke the Odd Order
Theorem of Feit-Thompson to improve Proposition 4.2 by getting
a subset $A$ of size $k$ with $|A^2|\le 2k-1$.
However for general groups we cannot always take $A=B$ to obtain
$\mu_G(r,s)$, for instance it is 
mentioned here that
$\mu_{Alt(4)}(6,6)=9$ but $|A^2|\ge 10$ for $A$ a 6 element subset of $Alt(4)$.

As for a lower bound on $\mu_G(k,k)$ when $|G|=2k+1$, we have from \cite{ekdm}
that $\mu_G(r,s)$ is at least the right hand side of (1) when $G$ is
solvable, so for $r=s=k$ this gives us that $\mu_G(k,k)=2k-1$.  
To complete the picture in this particular case, we have:
\begin{prop}
If $G$ is a finite group of order $2k+1$ and $A$ and $B$ are subsets of
size $k$ then we have $|AB|=2k-1,2k$ or $2k+1$ and all three values occur
except for the cyclic group $C_3$ where we only obtain $2k-1$, $C_5$ where
we only obtain $2k-1$ for $A=B$ and $2k-1,2k$ otherwise, and
$C_3\times C_3$ where we have $2k-1$ and $2k+1$ for $A=B$ and all three
values otherwise.
\end{prop}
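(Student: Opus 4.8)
The plan is to pin down the range first and then realise each value. The upper bound $|AB|\le|G|=2k+1$ is immediate since $AB\subseteq G$, while the lower bound $|AB|\ge 2k-1$ is exactly the equality $\mu_G(k,k)=2k-1$ recorded just before the statement: $G$ has odd order, so by the Odd Order Theorem it is solvable, and \cite{ekdm} then gives $\mu_G(k,k)$ at least the right-hand side of (1), which for $n=2k+1$ evaluates to $2k-1$ (the divisor $d=1$ contributes $2k-1$, and every divisor $d>1$ of the odd number $n$ contributes exactly $n=2k+1$, so the minimum is $2k-1$). That $2k-1$ is actually attained, even with $A=B$, comes from \cite{ekeu}: solvability yields $A\subseteq B$ with $|A|=|B|=k$ and $|AB|\le|A|+|B|-1=2k-1$, and $A\subseteq B$ with $|A|=|B|$ forces $A=B$. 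Thus $|AB|\in\{2k-1,2k,2k+1\}$ and $2k-1$ always occurs.

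Next I would settle the top value $2k+1$, i.e.\ when $AB=G$ is possible. The elementary count $|AB|\le|A|\,|B|=k^2$ shows $2k+1$ cannot occur unless $k^2\ge 2k+1$, that is $k\ge 3$; this rules it out precisely for $C_3$ ($k=1$) and $C_5$ (the only group of order $5$, $k=2$). For $k\ge 3$ I would realise $AB=G$ by a covering argument: writing $AB=\bigcup_{b\in B}Ab$ as a union of $k$ left translates of a fixed $k$-set $A$, a random choice of $b_1,\dots,b_k$ leaves each $g\in G$ uncovered with probability $(1-k/n)^k$, so the expected number of uncovered points is $n(1-k/n)^k=(2k+1)\big((k+1)/(2k+1)\big)^k$, which is $<1$ for all $k\ge 4$; hence some collection of at most $k$ translates covers $G$, and padding $B$ up to $k$ elements (which only enlarges the product) gives $AB=G$. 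The single remaining case $k=3$ (order $7$, so $C_7$) is checked by hand, e.g.\ $A=\{0,1,2\}$, $B=\{0,3,4\}$ in $\Z/7$.

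For the middle value $2k$ I would produce $A,B$ whose product omits exactly one group element: fixing $g_0$, one takes $B\subseteq G\setminus(A^{-1}g_0)$ so that $g_0\notin AB$, and then uses the remaining freedom in $B$ to ensure every other point is hit, forcing $|AB|=2k$. This is possible for every $G$ except $C_3$ (where $2k=2>1=k^2$). The remaining content is the refined book-keeping of which values survive under the constraint $A=B$, together with the assertion that the only genuine exceptions are $C_3$, $C_5$ and $C_3\times C_3$. Since the constructions above are effective once $|G|$ is not tiny, this reduces to a direct analysis of the groups of orders $3$, $5$ and $9$: for $C_5$ one checks that every $2$-subset is an arithmetic progression, so $A=B$ always gives $|A+A|=3$ while translates give $4$; for $C_3\times C_3$ one verifies by exhausting $4$-subsets (up to the symmetry group) that $A=B$ realises only $7$ and $9$, whereas an asymmetric pair such as $A=\{(0,0),(1,0),(2,0),(0,1)\}$, $B=\{(0,1),(1,1),(1,2),(2,2)\}$ gives $|A+B|=8$.

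The main obstacle is the uniform construction of the intermediate value $2k$ (and, to a lesser extent, of $2k+1$) in an arbitrary odd-order group, where one cannot appeal to Cauchy--Davenport or Kneser and must control a product set in a possibly non-abelian group of small exponent; I expect the cleanest route is the ``omit one point'' covering argument above, with the genuinely special group $C_3\times C_3$ (where the constraint $A=B$ skips the value $2k$) isolated and dispatched by the finite check on order $9$.
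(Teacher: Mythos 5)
Your proposal gets the easy parts right and even improves on one of them: the range $\{2k-1,2k,2k+1\}$ and the attainment of $2k-1$ with $A=B$ via \cite{ekdm} and \cite{ekeu} match the paper, and your probabilistic covering argument for $AB=G$ (random translates, expected number of uncovered points $(2k+1)\bigl((k+1)/(2k+1)\bigr)^k<1$ for $k\ge 4$, plus a hand check in $C_7$) is a clean alternative to the paper's appeal to \cite{bherz} and computer checks. But there is a genuine gap, and it is exactly where the paper spends all of its effort. Both of your constructions for the values $2k$ and $2k+1$ produce pairs with $A\neq B$, whereas the exceptional clauses of the statement are precisely about what survives when $A=B$: $C_5$ and $C_3\times C_3$ are exceptional exactly because the middle value $2k$ cannot be attained there by a set and itself. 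To prove these are the \emph{only} exceptions, one must therefore show that every other group of odd order $2k+1$ contains a set $A$ of size $k$ with $|A^2|$ equal to exactly $2k$. That is a positive assertion about infinitely many groups, and it cannot ``reduce to a direct analysis of the groups of orders $3$, $5$ and $9$''; checking those three groups only establishes the negative half, namely that the listed groups do behave exceptionally. The paper's proof opens with precisely this reduction (``we are done if we show that there exists $A\subseteq G$ with $|A|=k$ and $A^2$ having size exactly $2k$ unless $G=C_3$, $C_3\times C_3$ or $C_5$'') and then devotes essentially its entire length to that construction: induction on $k$, with cyclic $G$ handled by an explicit progression-like set, and otherwise using the Odd Order Theorem to produce a proper normal subgroup $N$ of order $2r+1$, inductively choosing $S\subseteq N$ with $|S|=r$ and $S^2$ omitting exactly one element of $N$, adjoining $s$ full cosets of $N$ no two of which are inverse to each other, and verifying that the square of the resulting $A$ misses exactly that one element; separate patches are needed when $N$ is itself one of the troublesome groups and when $N=C_3$. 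Nothing in your proposal plays this role.

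A second, smaller gap: even in the pairs version, your construction of the middle value $2k$ is an assertion rather than an argument. Once $g_0$ and $A$ are fixed, any admissible $B$ is a $k$-subset of the $(k+1)$-element set $G\setminus A^{-1}g_0$, so there are only $k+1$ candidates for $B$, and the choice of $A$ matters as well; the phrase ``uses the remaining freedom in $B$ to ensure every other point is hit'' is exactly the claim $AB=G\setminus\{g_0\}$ that needs proof. An expectation bound of the kind you use for $2k+1$ cannot supply it, since it bounds the number of missed points from above but does not pin that number at one. You would need either a genuinely different argument here or an explicit construction along the lines of the paper's inductive one, which produces the omitted point by design.
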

\begin{proof}
By the above results and basic calculations, we are done if
we show that there exists $A\subseteq G$ with $|A|=k$ and $A^2$ having
size exactly $2k$ unless $G=C_3, C_3\times C_3$ or $C_5$.

We proceed by induction on $k$ and use the Odd Order Theorem to say that
$G$ has a normal subgroup $N$ of order $2r+1$, where $|G|=(2r+1)(2s+1)$
with $r,s\ge 1$ (unless $G$ is cyclic, in which case we can take
$A=B=\{1,3,\ldots ,2k-3\}\cup\{0\}$).
First assume that $N$ is not equal to any of our three troublesome cases.
By induction we have $S\subseteq N$ with $|S|=r$ such that $S^2$ misses out
precisely one element $n$ from $N$. 
We then take all the elements from $s$ other cosets 
$x_1N, x_2N,\ldots ,x_sN$ of 
$N$ so that no $x_iN$ and $x_i^{-1}N$ are chosen together, as in Proposition
4.2. Placing these along with $S$ to obtain $A\subseteq G$ with
$|A|=(2r+1)s+r=k$, we have that if $a,b\in A$ with $ab\in N$ then
$a,b\in S$ so $A^2\cap N$ is also all of $N$ except $n$.
However if we have a coset $yN$
with $y\not\in N$ then there are two elements $cN,dN$ from the set
$\{N,x_1N,\ldots ,x_sN\}$ of size $s+1$
with $cNdN=yN$ by Proposition 4.1. Let us define subsets $C,D$ of $A$ where
$C=cN$ and $D=dN$, unless $c$ (or $d$) is the identity $e$ in which case
we let $C$ (or $D$) be equal to $S$. However $c$ and $d$ are not both $e$, 
so $|C|+|D|>|N|$ and hence $CD$ is all of $yN$ (by considering
$d^{-1}c^{-1}Cd$ and $d^{-1}D$ as subsets of $N$).

If however $N$ is troublesome then we can form $A$ as before but then
we find a non trivial coset $xN$ and remove $x$ from $A$, so that
$|A\cap xN|=2r$. We then select just the one element $x^{-1}$ from
its inverse $x^{-1}N$ in $G/N$ and put this back in $A$.
Provided that $r\ge 2$ the argument works 
as before because $|A\cap xN|+|A\cap N|=3r>2r+1$,
and the elements of $N$ in $A^2$ can only come from
$(xN\cap A)x^{-1}$, $x^{-1}(xN\cap A)$ and $(A\cap N)(A\cap N)$. But the
first two sets are equal to $N-\{e\}$ so we can choose $A\cap N$ to consist
of $r$ elements such that $(A\cap N)(A\cap N)$ misses $e$ by Proposition
4.2.

Now suppose that $r=1$, so that $N=C_3$. This argument will still ensure
that $A^2\cap N=N-\{e\}$ but now we have $|A\cap xN|=2$ and
$|A\cap N|=1$, meaning that $A^2$ need not cover all of $xN$. However if
we can find $x$ with $xN$ having order greater than 3 in $G/N$ then we
can ensure that we choose all of $x^2N$ as one of our cosets which are
fully contained in $A$, thus $xN=x^{-1}\cdot x^2N$ is in $A^2$.

Finally if $G/N$ has more than 5 elements then we can find $aN$ and $bN$
disjoint from all of $N,xN$ and $x^{-1}N$ with $aNbN=xN$, so we include
all of $aN$ and $bN$ in $A$ too, leaving only $|N|=|G/N|=3$.
\end{proof} 

Now we move to groups of even order where
it is clear that we can have $H^2=H$ with $|H|=|G|/2$ if
$H$ is a subgroup of index 2 in $G$, or a coset thereof. 
Indeed if a subset $S$ of a finite
group $G$ satisfies $|S^2|=|S|$ then we already know that $S$ must be
a left right coset by Theorem 3.8 (ii). 
However there certainly are many groups of even order 
possessing no subgroup of index 2:
for instance the alternating groups $Alt(n)$ for $n\ge 4$. 
If $r+s=|G|$ then we can easily find $A$ and $B$ of sizes $r$ and $s$ with
$|AB|\ne G$ by setting $B$ equal to the complement of $A^{-1}$, but
we would like to take $A=B$.

The following applies to all groups of even order without needing any
knowledge of finite simple groups.
\begin{thm}
If $G$ has order $2k$ then there exists a subset $S$ of size $k$ with
$S^2\neq G$.
\end{thm}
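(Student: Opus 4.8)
The plan is to produce a single element $g_0\in G$ that is guaranteed to be missed by $S^2$, and to build $S$ around it. Since $G$ is finite with $|S|=|G\setminus S|=k$, the condition $g_0\notin S^2$ is equivalent to $S^{-1}g_0\cap S=\emptyset$, and because $|S^{-1}g_0|=k=|G|/2$ this is in turn the same as demanding $S^{-1}g_0=G\setminus S$. Writing $\psi(x)=x^{-1}g_0$, which is a bijection of $G$, the task reduces to finding $g_0$ together with a subset $S$ of size $k$ with $\psi(S)=G\setminus S$.

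First I would observe that, for a fixed $g_0$, such an $S$ exists precisely when every cycle of the permutation $\psi$ has even length: since $\psi$ maps each cycle to itself, membership of $S$ must alternate along each cycle, which is possible (and then selects exactly half of that cycle, giving $|S|=k$ automatically) if and only if the cycle has even length. So the whole problem collapses to choosing $g_0$ so that $\psi$ has no odd cycle. The most obvious odd cycles are the fixed points, and $\psi(x)=x$ says exactly $x^{2}=g_0$; hence the fixed points are the square roots of $g_0$, and I must at least take $g_0$ to be a non-square.

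The step I expect to be the real obstacle is ruling out odd cycles of length $\ge 3$, since a priori $\psi$ could have long cycles of every parity. My plan to control this is to compute $\psi^{2}(x)=g_0^{-1}xg_0$, so that $\psi^{2}$ is conjugation by $g_0$. If $g_0$ has $2$-power order $2^{a}$ then conjugation by $g_0$ has order dividing $2^{a}$, forcing $\psi^{2^{a+1}}=\mathrm{id}$ and hence making every cycle length a power of $2$. For such a $g_0$ the only candidate odd cycle length is $1$, so once $g_0$ is moreover a non-square there are no odd cycles at all.

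It then remains to find a non-square $2$-element, and here is the clean finishing point: since $|G|$ is even, Cauchy's theorem supplies an involution, so the set of $2$-elements is nonempty, and I would simply take $g_0$ to be one of maximal order $2^{a}$. If $g_0=y^{2}$ then $y$ would be a $2$-element of order $2^{a+1}>2^{a}$, contradicting maximality; thus $g_0$ is not a square. Feeding this $g_0$ into the construction, alternating around each (necessarily even) cycle of $\psi$ yields $S$ with $|S|=k$ and $\psi(S)=G\setminus S$, so $g_0\notin S^{2}$ and $S^{2}\neq G$. The two counting identities used (the equivalence $g_0\notin S^{2}\Leftrightarrow S^{-1}g_0\cap S=\emptyset$, and the fact that alternating around even cycles picks out exactly half of $G$) are routine and I would simply verify them at the end. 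Note this argument uses nothing beyond Cauchy's theorem, as promised.
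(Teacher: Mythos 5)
Your proof is correct, and it takes a genuinely different route from the paper's. The paper insists on a \emph{symmetric} set $S=S^{-1}$, so that $g\notin S^2$ reduces to $S\cap gS=\emptyset$, and then 2-colours the orbits of left multiplication by $g$ (each of even length equal to the order of $g$); the price is the delicate bookkeeping needed to place inverses consistently, which is why the paper introduces the auxiliary $C_m\times C_2$ action (conjugation by $g$ together with inversion) and chooses $g$ with maximal 2-part of its order, ruling out orbit coincidences via parity-of-conjugates and non-square arguments. You dispense with symmetry altogether by 2-colouring the cycles of the single permutation $\psi(x)=x^{-1}g_0$; your key lemma, that $\psi^2$ is conjugation by $g_0$, so that taking $g_0$ a 2-element forces every cycle length to be a power of 2, replaces all of that machinery, and non-squareness of $g_0$ (maximality among 2-elements) just kills the fixed points. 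The one step you leave implicit, that $g_0=y^2$ forces $y$ to have order exactly $2^{a+1}$, is routine: $y^{2^{a+1}}=e$ and $y^2$ has order $2^a\ge 2$, so the order of $y$ is even and hence equal to $2^{a+1}$. What the paper's heavier construction buys is exactly what it uses in Corollary 4.5: since its $S$ is symmetric and disjoint from $g^iS$ for all odd $i$, its $S^2$ misses all $m/2$ odd powers of $g$, which is how the paper gets $|G|-|S^2|$ arbitrarily large over suitable simple groups. Your construction certifies only the single missing element $g_0$: alternation does give $\psi^j(S)\cap S=\emptyset$ for all odd $j$, but for $j=2l+1$ this translates to $g_0^{l+1}\notin Sg_0^lS$ rather than $g_0^{2l+1}\notin S^2$. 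So your argument proves Theorem 4.4 more cleanly, but Corollary 4.5 would need additional work if one started from it.
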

\begin{proof}
The requirement that there is a $g\in G$ with $g\not\in S^2$ is equivalent
to $S\cap gS^{-1}$ being empty. We will try to find a suitable $g$ and
a symmetric $S$, so that it is enough to confirm $S\cap gS=\emptyset$.

We assume that $g$ is of even order: note that this ensures that $g^i$ is 
never conjugate to $g^j$ if $i$ and $j$ are of different parities.
We further ask that $g^i$ is never a square in $G$ when $i$ is odd. This
can be guaranteed by taking $g$ to have order $m=2^rl$ where $l$ is odd and
$r$ is  as large as possible amongst the elements of $G$.

We now consider the action of left multiplication by $g$ on $G$. This is
a free action so each orbit is a $m$-cycle and there are $2k/m$ of them.
On taking Orb$(x)$ for $x\in G$, the idea is to form $S$ and its complement 
$T=gS$ by placing $g^ix$ in $S$ for $i$ even and $T$ for $i$ odd. However
we need to ensure that inverses are placed together.
If $x^{-1}\in\mbox{Orb}(x)$ then this is fine as $g^i$ cannot equal $x^{-2}$
for $i$ odd. However in general Orb$(x^{-1})$ will be a separate orbit
and hence disjoint from Orb$(x)$. To achieve this, we consider a different
action on $G$ by the group $C_m\times C_2$ where the action of the first 
component is conjugation by $g$ and the second sends $x$ to $x^{-1}$: these
do commute. We then place the orbits under the new action
of $x, g^2x,\ldots ,g^{m-2}x$ in $S$
and those of $gx,g^3x,\ldots ,g^{m-1}x$ in $T$. Note that we are not
claiming these are all distinct orbits or even that the orbits are of the
same size, but this is a well defined procedure unless the orbits of
$g^ix$ and $g^jx$ are equal for $i$ odd and $j$ even. However then we would
have some $k$ such that $g^ix$ is equal to $g^{k+j}x^{\pm 1}g^{-k}$.
The plus sign implies that $x$ conjugates $g^{-k}$ to $g^{i-k-j}$, with
the two indices having different parities, so different orders, whereas
taking the minus sign means that $(g^kx)^2=g^{2k+j-i}$ but $2k+i-j$ is
odd so this is not a square.

Having chosen $x$, we have now partitioned all elements of the form
$g^ix^{\pm 1}g^j$ for arbitrary $i$ and $j$, so we now choose any $y$
not in this set and start again. We now need to show that for any
$\gamma \in G$ we have that $\gamma^{-1}$ is in the same subset and
$g\gamma$ is not. This ensures that $S\cap T=\emptyset$ with $T=gS$, and
as $S\cup T=G$ we obtain $|S|=k$. We express $\gamma$ as $g^iz^{\pm 1}g^j$
where $z\in G$ was one of the elements picked out during the above process.
We see that $\gamma$ is in the orbit of $g^{i+j}z$ or $g^{-i-j}z$, so
certainly $\gamma^{-1}$ is in the same orbit and $g\gamma$ will be in the
orbit of $g^{i+j+1}z$ or $g^{-i-j-1}z$ respectively, either of which end up
on the other side.
\end{proof}
\begin{co}
There exist finite simple groups $G$ and subsets $S$ of $G$ with $|S|=|G|/2$
such that $|G|-|S^2|$ can be arbitrarily large.
\end{co}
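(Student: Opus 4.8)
The plan is to first pin down exactly how large the gap $|G|-|S^2|$ can be for a half-sized set, and then to engineer that quantity to grow. Throughout $G$ is finite (a finite simple group, which has even order by Feit--Thompson) and $|S|=|G|/2$. The key observation is that an element $c$ lies outside $S^2$ precisely when $S\cap cS^{-1}=\emptyset$; since $|cS^{-1}|=|S|=|G|/2$, disjointness forces $cS^{-1}=G\setminus S$. Hence the set of omitted elements $G\setminus S^2=\{c:cS^{-1}=G\setminus S\}$ is, whenever it is non-empty, a single left coset of the right stabiliser $H:=\{b:Sb=S\}$ (one checks $aS^{-1}=S^{-1}$ is equivalent to $Sa^{-1}=S$). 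Thus, provided $S^2\neq G$, we obtain the clean identity $|G|-|S^2|=|H|$, and moreover $S$ is then forced to be a union of left cosets of $H$.

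This reduces the Corollary to the following: for every $N$ produce a finite simple group $G$, a subgroup $H$ with $|H|\ge N$, and a set $S$ equal to a union of exactly half of the left cosets of $H$ such that $S^2\neq G$. By the identity above the gap is then automatically $|H|\ge N$. Note that Theorem 4.4 already supplies, in every finite simple group, a half-sized $S$ with $S^2\neq G$, so a gap of at least $1$; the real content is to force the stabiliser $H$ to be large rather than trivial.

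To do this I would mimic the swapping construction of Theorem 4.4 one level up, on the coset space $\Omega=G/H$ rather than on $G$ itself. I would choose $H\le G$ together with an element $g$ whose left action on $\Omega$ has only even-length cycles, then two-colour each cycle alternately to obtain a set $\mathcal S\subseteq\Omega$ of size $|\Omega|/2$ with $g\mathcal S=\Omega\setminus\mathcal S$. Taking $S$ to be the preimage of $\mathcal S$ in $G$ makes $S$ a union of half the left cosets of $H$ with $gS=G\setminus S$; if in addition $S$ is symmetric then $S\cap gS^{-1}=S\cap gS=\emptyset$, so $g\notin S^2$ and the gap equals $|H|$. Running this over a family with $|H|\to\infty$ (for instance alternating groups $A_n$ with $H$ a block or Young-type stabiliser of growing order) would finish the proof.

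The main obstacle is exactly the tension with simplicity. A simple group has no normal $H$, and its coset actions tend to be primitive, which is hostile both to the existence of a $g$ acting with only even cycles on $\Omega$ and to the requirement that the preimage $S$ be symmetric (symmetry of $S$ as a subset of $G$ mixes the left- and right-coset partitions of $H$). I would therefore spend most of the effort selecting an explicit, deliberately imprimitive family $(G,H)$ --- equivalently a chain $H<K<G$ whose block structure on $\Omega$ carries a manifest two-colouring reversed by $g$ --- and checking directly there that $S$ is symmetric and that $S^2$ genuinely omits a coset. This verification, rather than the counting identity, is the crux.
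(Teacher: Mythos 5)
Your opening reduction is correct and genuinely nice: for $|S|=|G|/2$, an element $c$ omitted from $S^2$ forces $cS^{-1}=G\setminus S$, so $G\setminus S^2$ (when non-empty) is a single left coset of the right stabiliser $H=\{b:Sb=S\}$, giving the clean identity $|G|-|S^2|=|H|$, with $S$ a union of left cosets of $H$. But the argument stops being a proof at exactly the point you flag yourself: no family $(G,H,S)$ is ever produced. The ``one level up'' swapping construction is only a plan, and the obstructions you name --- finding $g$ acting with only even cycles on $G/H$, and above all making the preimage $S$ (a union of \emph{left} cosets of $H$) symmetric, when $S^{-1}$ is a union of \emph{right} cosets --- are real and are left unresolved; you explicitly defer ``the crux'' to a verification you never carry out. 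As written, this is a correct lemma plus a research programme, not a proof of the corollary.

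The missing step is not a new imprimitive family but a second look at the construction you already cited. Your own identity, applied to the set $S$ built in Theorem 4.4, finishes the job with no further work, and this is essentially the paper's proof. The Theorem 4.4 set consists precisely of the elements $g^iz^{\pm 1}g^j$ (for the chosen representatives $z$) with $i+j$ even, so right multiplication by $g^2$ preserves it: $\langle g^2\rangle$ lies inside your stabiliser $H$, whence $|G|-|S^2|=|H|\ge m/2$, where $m$ is the order of $g$. (The paper phrases this without the coset identity: the construction omits not just $g$ but every $g^i$ with $i$ odd.) Since $g$ was chosen in Theorem 4.4 to have the maximal power of $2$ dividing its order, one only needs finite simple groups containing elements whose order is divisible by an arbitrarily high power of $2$, and the alternating groups already supply these; the gap $|G|-|S^2|$ is then arbitrarily large. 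So your counting identity meshes perfectly with the paper's route --- you simply did not notice that the stabiliser of the Theorem 4.4 set is already large, and instead set off toward a construction whose feasibility inside simple groups you could not verify.
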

\begin{proof} The above construction showing that $g\notin S^2$ also shows
that $g^i\notin S^2$ for odd $i$, so we miss out on at least
$m/2$ elements where
$m$ is the order of $g$. Thus we merely require finite simple groups with
elements of order an arbitrarily high power of 2, and even the alternating
groups provide this.
\end{proof}

Other applications of these results enable us to give counterexamples to 
Freiman type statements with constant 2. The original theorem, due to
Freiman himself in \cite{mark}, is that a subset $A$ of 
$G$ satisfies $|A^2|<(3/2)|A|$ if and only if
$A$ is contained in a left right coset $gH=Hg$ (where we can of course
take $g$ to be any element of $A$, so that $A$ is in the normaliser of
$H$) with $|A|>2/3|H|$ and such that $A^2$ is exactly equal to
$g^2H=Hg^2$. A neat proof is also given in the Tao blog of
November 2009. In \cite{astqu} it is asked how one can characterise
sets $A$ with $|A^2|<2|A|$, but it is clear that the conclusion of
Freiman's theorem can no longer apply: if $A=\{e,g\}$ with
$A^2=\{e,g,g^2\}$ and $g$ has order at least 4 then $A^2$ is not a coset,
because it contains $e$ and so would have to be a subgroup. In fact even
if all elements of $G$ have order 1, 2 or 3 the conclusion is still false:
\begin{prop}
If $G$ is any finite group, except for a cyclic group of order at most 3
or $C_2\times C_2$,
then there exists $A\subseteq G$ with $|A^2|\le (7/4)|A|$ but such that
$A^2$ is not a coset of any subgroup of $G$.
\end{prop}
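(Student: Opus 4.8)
The plan is to reduce the statement to a short list of small ``base'' groups, exhibit an explicit set $A$ in each, and use a transfer principle so that a construction carried out inside a subgroup serves for the whole of $G$. The transfer principle is this: if $H\le G$ and $A\subseteq H$ with $e\in A$, then whenever $A^2$ is not a coset of any subgroup of $H$ it is not a coset of any subgroup of $G$. Indeed $A^2\subseteq H$ and $e\in A^2$; were $A^2=xK$ with $K\le G$, then $e\in xK$ forces $x\in K$, so $A^2=K$ is a subgroup, and $K=A^2\subseteq H$ gives $K\le H$, so $A^2$ would be a coset of $H$. Since $|A^2|/|A|$ is computed inside $H$, the whole construction transfers, and because $e\in A^2$ the non-coset condition reduces in each case to checking that $|A^2|$ does not divide $|H|$ (so $A^2$ cannot even be a subgroup of $H$).

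\textbf{Generic cases.} If $G$ has an element $g$ of order at least $4$, take $A=\{e,g\}$, so $A^2=\{e,g,g^2\}$ and $|A^2|/|A|=3/2$; since $g\cdot g^2=g^3\notin A^2$ the set $A^2$ is not closed, hence not a subgroup, and as $e\in A^2$ it is not a coset. Otherwise every element has order $1,2$ or $3$, so $|G|=2^b3^a$, the Sylow $2$-subgroup is elementary abelian $C_2^b$, and the Sylow $3$-subgroup has exponent $3$. If $a\ge 2$ the latter contains a copy of $C_3^2$, in which one takes $A=\langle v\rangle\cup\{w\}$ for commuting order-$3$ elements with $\langle v\rangle\ne\langle w\rangle$, so that $A^2=\langle v\rangle\cup(\langle v\rangle w)\cup\{w^2\}$ has $7$ elements and ratio $7/4$. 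If $b\ge 3$ the Sylow $2$-subgroup contains $C_2^3$, in which $A=\{e,a_1,a_2,a_3\}$ for independent involutions gives $A^2=\{e,a_1,a_2,a_3,a_1a_2,a_1a_3,a_2a_3\}$, again of size $7$ and ratio $7/4$. As $7$ divides neither $8$ nor $9$, the transfer principle shows $A^2$ is not a coset in $G$.

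\textbf{Residual small cases.} The only groups left have $a\le 1$ and $b\le 2$, hence $|G|\le 12$; inspecting the finitely many groups of orders $1,2,3,4,6,12$ all of whose elements have order at most $3$, the non-exceptional ones are exactly $S_3$ and $A_4$. For $S_3$ take two distinct transpositions $s,t$ and set $A=\{e,s,t\}$; then $A^2=\{e,s,t,st,ts\}$ is the identity, $s,t$ and the two $3$-cycles, so $|A^2|=5$, ratio $5/3$. For $A_4$ let $V$ be the normal Klein four-group, let $C$ be a coset of $V$ consisting of four $3$-cycles, and let $t\in V$; taking $A=\{e\}\cup C\cup\{t\}$ of size $6$ one finds $A^2=A_4\setminus\{\text{the two involutions in }V\setminus\{t\}\}$, so $|A^2|=10$, ratio $5/3$. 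In each case $|A^2|$ ($=5$ or $10$) fails to divide $|G|$ ($=6$ or $12$), so $A^2$ is not a coset.

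\textbf{Main obstacle.} The generic constructions are routine; the real work lies in the exponent-$\{1,2,3\}$ regime, where the doubling of any non-coset set is forced to be fairly large. The delicate point is $A_4$, which contains none of $C_4$, $C_2^3$, $C_3^2$ or $S_3$, so the transfer principle cannot be invoked and a bespoke set must be found by direct computation; I expect this, together with confirming that $C_1,C_2,C_3$ and $C_2\times C_2$ genuinely admit no such $A$ (every candidate square there being a coset, which is what pins down the list of exceptions), to be the crux, since the minimal non-coset doubling ratio of $A_4$ sits only just under the threshold $7/4$.
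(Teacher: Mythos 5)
Your proof is correct and follows essentially the same route as the paper's: reduce to groups whose non-identity elements have order $2$ or $3$ via the set $\{e,g\}$ for $g$ of order at least $4$, use the size-$4$ sets with squares of size $7$ coming from $C_3\times C_3$ and $C_2\times C_2\times C_2$ inside the Sylow subgroups (with non-coset status following from $7$ not dividing $|G|$), and finish with the residual cases $S_3$ and $A_4$, where sets of size $|G|/2$ give ratio $5/3$. The only cosmetic differences are that you realise $C_3\times C_3$ as a subgroup of the Sylow $3$-subgroup and invoke an explicit transfer lemma, whereas the paper pulls the set back through a surjection of the Sylow $3$-subgroup onto $C_3\times C_3$, and that you exhibit the $S_3$ and $A_4$ sets explicitly where the paper merely asserts their existence.
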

\begin{proof}
We can assume that the order of any non identity element in $G$ is 2 or 3.
If $G$ has elements of order 3 then let $H_3$ be a Sylow 3-subgroup of
$G$. We certainly have $|A|=4$ but $|A^2|=7$ inside $C_3\times C_3$ and
$H_3$ surjects to this unless $H_3$ is cyclic, in which case it can only
have order 3. Otherwise we pull back $A$ from $C_3\times C_3$ to $H_3$,
so that $A$ is a subgroup of $G$ consisting of 4 cosets of $H_3$ with
$A^2$ equal to 7 cosets out of 9. Then $|A^2|/|G|$ is 7/9 times a power
of 2, so $A^2$ is not a coset in $G$.


Similarly if a Sylow 2-subgroup $H_2$ of $G$ is non trivial then it
must be $C_2$, $C_2\times C_2$ or it contains $C_2\times C_2\times C_2$
which has a subset $A$ of size 4 such that $|A^2|=7$.
This leaves us with $H_3=C_3$ and $H_2=C_2$ or $C_2\times C_2$, so we are just
left with $S_3$ and $A_4$ where we can have in each case $|A|=|G|/2$ with
$|A^2|=(5/3)|A|$.
\end{proof}

We note that $7/4$ is best possible here even if we exclude finitely many
groups as it was shown in \cite{hpeu}
that for $A\subseteq (C_2)^n$ with $|A^2|<2|A|$ then either $A^2$ is
a subgroup or $|A^2|\ge 7/4|A|$.

Consequently work on extending Freiman's (3/2) Theorem has concentrated
on taking $3/2\le K<2$ and trying to show that there exists a constant
$C(K)$ such that if $A$ is a subset of any finite group $G$ with
$|A|\le K|A^2|$ then there exists some subgroup $H$ of $G$ such that
$A^2$ is the union of no more than $C(K)$ right (say) cosets of $H$.
This cannot hold for all finite groups $G$ when $K=2$ because we can
take long arithmetic progressions; namely $A=\{e,g,\ldots ,g^{n-1}\}$
for $g$ an element of some group with the order of $g$ being at least
$2n$. It clearly does hold if we restrict to only finitely many groups
and, using a major group theory result, we can show here that this is
necessary.
\begin{prop} If $\cal G$ is any infinite family of finite groups then for
any $C>0$ there exists a group $G$ in $\cal G$ and a subset $A$ of $G$
such that $|A^2|<2|A|$ but $A^2$ is not the union of at most $C$ 
right cosets of some subgroup of $G$.
\end{prop}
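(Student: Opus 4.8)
The statement asks me to defeat, for every infinite family $\cal G$ and every bound $C$, the possibility that sets with $|A^2|<2|A|$ always have $A^2$ covered by few cosets. The natural strategy is to build $A$ as an arithmetic-progression-like set living inside a cyclic subgroup, since the discussion immediately preceding the proposition already identifies $A=\{e,g,\ldots,g^{n-1}\}$ as the obstruction to Freiman-type statements at $K=2$: here $A^2=\{e,g,\ldots,g^{2n-2}\}$ has size $2n-1<2|A|$, and whenever $g$ has order large compared with $n$, the set $A^2$ is a genuine progression that is not close to being a union of few cosets. So the whole game reduces to producing, inside infinitely many of the groups in $\cal G$, cyclic subgroups of unboundedly large order, and then choosing $n$ (hence the progression length) appropriately relative to $C$.

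The plan is therefore as follows. First I would reduce to a statement about element orders: if the groups $G\in\cal G$ contain elements of arbitrarily large order, pick $G$ with an element $g$ of order $\ge 2n$ for a suitable $n$, set $A=\{e,g,\ldots,g^{n-1}\}$, and verify $|A^2|=2n-1<2n=2|A|$. Then I must show $A^2$ cannot be written as a union of at most $C$ right cosets of any subgroup $H\le G$; the key point is that if $A^2\subseteq x_1H\cup\cdots\cup x_CH$ and $H$ is nontrivial then $A^2$ would contain a full coset's worth of structure forcing $|H|$ and the geometry of the progression to clash — more cleanly, a progression of length $2n-1$ inside a cyclic group $\langle g\rangle$ of order $\ge 2n$ meets any coset of a subgroup $H$ in at most $\lceil |A^2|/[\,\langle g\rangle:H\cap\langle g\rangle\,]\rceil$ points in a controlled way, so covering it needs more than $C$ cosets once $n$ is large enough relative to $C$.

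The remaining and genuinely essential case is when $\cal G$ has a uniform bound on element orders, i.e.\ $G$ has bounded exponent. Here the arithmetic-progression trick is unavailable, and this is where the ``major group theory result'' flagged in the paper must enter: a family of finite groups of bounded exponent that is infinite must, by the solution of the restricted Burnside problem (Zelmanov), fail to be uniformly of bounded \emph{order}, and combining bounded exponent with growing order forces the groups to contain large elementary abelian or nilpotent sections. In such a section I would instead import the $(C_2)^n$ or $C_3\times C_3$ style construction already used in Proposition~4.6, which produces $A$ with $|A^2|=\tfrac74|A|<2|A|$ and $A^2$ not even a single coset; iterating or taking direct powers inside the growing section yields $A^2$ requiring arbitrarily many cosets to cover.

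The main obstacle, and the step I expect to carry the real weight, is precisely this bounded-exponent case: the progression argument is essentially bookkeeping, but ruling out a uniform coset cover when element orders are capped requires invoking a deep structural input (restricted Burnside / Jordan-type bounds) to guarantee large abelian or solvable sections inside infinitely many members of $\cal G$, and then transporting the elementary-abelian counterexample up to $G$ while checking that the covering number genuinely grows with the rank of the section rather than staying bounded.
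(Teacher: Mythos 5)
Your skeleton matches the paper's --- dichotomize on whether the exponents of the groups in $\cal G$ are bounded, use long progressions when they are not, and invoke Zelmanov's solution of the restricted Burnside problem when they are --- but the mechanism you propose in the bounded-exponent case, which you yourself identify as carrying the real weight, does not work. Importing the sets of Proposition 4.6 gives $|A^2|=\tfrac{7}{4}|A|$ with $A^2$ not a \emph{single} coset; that conclusion does not scale to ``not a union of at most $C$ cosets'', since there $A^2$ has a bounded number of elements (or of cosets of a pulled-back kernel) and hence \emph{is} a union of boundedly many cosets of the trivial (or kernel) subgroup. Worse, ``taking direct powers'' destroys the hypothesis rather than amplifying the conclusion: doubling constants multiply, so $(A\times A)^2=A^2\times A^2$ has size $\left(\tfrac{7}{4}\right)^2|A\times A|>2|A\times A|$, and already the first power violates $|A^2|<2|A|$. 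What the paper actually does is qualitatively different: after reducing to $2$-groups and using Zelmanov to force the number of generators $d$ to grow, it takes $B\subseteq (C_2)^d$ of size $2^{d-1}$ --- \emph{half} the group, not a small set --- with $|B^2|=2^d-1$, pulls $B$ back to a union $A$ of cosets of the kernel $K$, and then argues by divisibility: $|A^2|=(2^d-1)|K|$ has the odd factor $2^d-1$, while any subgroup $H$ of a $2$-group whose right cosets make up $A^2$ has $2$-power order, so $|H|$ divides $|K|$ and $A^2$ must consist of at least $2^d-1$ cosets. This parity argument is the missing key idea; nothing in your sketch produces a lower bound on the number of cosets that grows with the rank of the section.

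Two further gaps. First, you repeatedly phrase the target as a covering statement, $A^2\subseteq x_1H\cup\ldots\cup x_CH$; that version is false (a single coset of $H=G$ covers anything), and the correct arguments must exploit that the cosets constituting $A^2$ lie \emph{inside} $A^2$. In the progression case this is what forces $H\subseteq A^2\subseteq\langle g\rangle$, whence $|H|$ divides both $|A^2|=2n-1$ and the order $m$ of $g$, and for $|H|\ge 2$ the element $g^{m-m/|H|}\in H$ cannot lie in a progression of length $2n-1$ once $m$ is large compared with $n$; your intersection-counting bound has no such leverage. Second, bounded exponent does not by itself reduce matters to $2$-groups or $(C_2)^d$-sections: the odd parts of the orders may be unbounded, and your construction says nothing there. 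The paper handles this by a separate argument --- Proposition 4.3 gives, in a group of odd order $2k+1$, a set with $|A|=k$ and $|A^2|\in\{2k-1,2k\}$, and since $|H|$ must divide both the odd number $2k+1$ and $|A^2|$, the subgroup $H$ is trivial and $A^2$ is at least $2k-1$ cosets --- together with a Sylow-subgroup reduction (right cosets of $H\le G$ contained in a Sylow subgroup force $H$ to lie in that Sylow subgroup). Only after these steps may one restrict to $2$-groups; your appeal to ``large elementary abelian or nilpotent sections'' elides exactly this.
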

\begin{proof}
We are done by long arithmetic progressions unless there is an upper
bound on the exponents of all groups in $\cal G$. Moreover if there
are infinitely many groups in $\cal G$ of odd order then we are also
done by Proposition 4.3, because if $|G|=2k+1$ we have $A$ with $|A|=k$
but $|A^2|=2k$ or $2k-1$, so that if $A^2$ is a union of right cosets
of $H\leq G$ then $|H|$ divides $|G|$ and $|A^2|$. But here
we can let $k$ tend to infinity.

Furthermore we are done if the odd part of the orders of the groups $G$ in
$\cal G$ is unbounded, as we can take Sylow subgroups $S$ of $G$. On applying
the above to $A$ in $S$, if $A^2\subseteq S$ is a union of right cosets
of a subgroup $H$ of $G$ then these right cosets are in $S$, so $H$ is a
subgroup of $S$ and the previous argument applies.

We can now restrict $\cal G$ to an infinite sequence $(G_n)$ of
2-groups with bounded exponent. By Zelmanov's solution of the restricted
Burnside problem, the minimum number of generators $d(n)$ of $G_n$ must
tend to infinity with $n$, so $G_n$ surjects to $(C_2)^{d(n)}$ using the
Frattini subgroup. Now in $(C_2)^d$ we have a subset $B$ of size $2^{d-1}$
with $B^2=2^d-1$; for instance take $B$ to be the set with final
coefficient 0 but replacing the element $(1,1,\ldots ,1,0)$ with
$(0,\ldots ,0,1)$. We then pull $B$ back to a union $A$ of cosets in $G_n$
of the kernel $K$ of this surjection. We obtain $|A^2|=(2-(1/2^{d-1}))|A|$
and $|A^2|=(2^d-1)|K|$. Thus if $A^2$ is a union of cosets of some
$H\leq G_n$ then suppose $|G_n|=2^{f(n)}$. We have $|K|=2^{f(n)-d(n)}$
and $|H|$ divides $|G_n|$ and $|A^2|$, thus $|H|$ divides
$2^{f(n)-d(n)}$ meaning that $A^2$ would be at least $2^d-1$ cosets of
$H$.
\end{proof} 

A recent result on product growth was established by Babai,
Nikolov and Pyber in \cite{soda}, following work of Gowers.
This states that for a finite group
$G$ of order $n$, let $m$ be the minimum degree of a non trivial (real)
representation of $G$ into $GL(n,\mathbb R)$. Then for $A,B$ non empty
subsets of $G$ with $|A|=r$ and $|B|=s$
we have $|AB|>\frac{n}{1+(n^2/mrs)}$, with the right hand side at least
the minimum of $n/2$ and $mrs/(2n)$. 
If $r\ge s$ we already have the bound that $|AB|\ge r$, but
$\frac{n}{1+(n^2/mrs)}\ge r$ implies that $0\ge (n-mr/2)^2+r^2(m-m^2/4)$.
Thus we obtain nothing new from this inequality if $m\le 4$, 
which is the case for all finite soluble groups for instance.
However this inequality really comes into
play as $m$ increases. In particular for finite simple groups $G$
we have that $m$ tends to infinity with the order of $G$,
so this shows that although
$|G|-|S^2|$ in Corollary 4.5 can be arbitrarily high, we must have
$|G|/|S^2|$ tending to 1.

This result also solves a question in \cite{ekeu}, which
asks at the start of Section 3 whether the small sumsets property
holds for all finite groups $G$, that is whether for all $1\le r,s\le n=|G|$
we have subsets $A,B$ of $G$ with $|A|=r$, $|B|=s$ and $|AB|\le r+s-1$.
We can find plenty of counterexamples by taking $\lambda$ and $\mu$
such that $|A|=\lambda n,|B|=\mu n$
and assuming that $\lambda+\mu\le 1/2$.
Then we would have $|A|+|B|=(\lambda+\mu)n$ but $|AB|$ is greater than the
minimum of $n/2$ and $mn\lambda\mu/2$, so that if 
$m\ge 2(\lambda+\mu)/
(\lambda\mu)$ then $|AB|>|A|+|B|$. Thus we obtain lots of pairs $(r,s)$
where the small sumsets property fails by 
taking $\lambda$ and $\mu$ as
above, finding a group $G$ such that 
such that $m\ge 2(\lambda+\mu)/(\lambda\mu)$ 
and setting $(r,s)=(\lambda |G|,\mu |G|)$.  
 
We now consider these results in the context of infinite measurable
groups $G$ with $|G|=1$.
For such a $G$, let $u$ be the supremum of $|A|$ over
measurable sets $A$ with $A^2$ also measurable and such that $|A^2|<1$. 
From Proposition 4.2 and Theorem 4.4
we immediately have:
\begin{co}
For $G$ any group equipped with an acceptable measure and with $G/R_G$
infinite, we have $u=1/2$ and this is attained if there exists a finite 
quotient of $G$ with even order.
\end{co}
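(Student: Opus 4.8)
The plan is to prove the two bounds $u\le 1/2$ and $u\ge 1/2$ separately, with the lower bound giving attainment exactly in the even case. For $u\le 1/2$ I would reproduce the mechanism of Proposition 4.1. Suppose $A$ and $A^2$ are measurable with $|A^2|<1$. Since $A^2\subseteq G$ is measurable and $|G|=1$, Proposition 2.3(i) gives $|G\setminus A^2|=1-|A^2|>0$, so $G\setminus A^2$ is non-empty and hence there is some $g\notin A^2$. Unwinding the definition of the product set, $g\notin A^2$ is exactly the statement $A\cap gA^{-1}=\emptyset$. Disjointness together with invariance then forces $|A|+|gA^{-1}|\le|G|=1$; using $|gA^{-1}|=|A^{-1}|=|A|$ (the last equality being Proposition 2.6 for the basic measure) this reads $2|A|\le 1$, so $|A|\le 1/2$ and therefore $u\le 1/2$.

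For the lower bound I would use that $G/R_G$ is infinite. Since $G/R_G$ is residually finite it, and hence $G$, admits finite quotients of arbitrarily large order. Indeed, were all finite quotients of $G/R_G$ of order at most some $M$, one could take a finite-index normal subgroup $N^\ast$ whose quotient has the maximal attainable order; then for any finite-index normal $N$ the finite quotient of $G/R_G$ by $N^\ast\cap N$ has order bounded by that maximum yet surjects onto $(G/R_G)/N^\ast$, forcing $N^\ast\subseteq N$, hence $N^\ast\subseteq R_{G/R_G}=I$ and $G/R_G$ finite, contrary to hypothesis. So $G$ has finite quotients $Q=G/N$ of unbounded order.

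Given such a quotient I would apply the parity-appropriate construction and pull back along $\pi\colon G\to Q$. If $|Q|=2k$ is even, Theorem 4.4 supplies $S\subseteq Q$ with $|S|=k$ and $S^2\neq Q$; the pullback $A=\pi^{-1}(S)$ lies in $\cal B$, has $|A|=k/2k=1/2$, and satisfies $A^2=\pi^{-1}(S^2)$, so $|A^2|=|S^2|/|Q|<1$. Hence if $G$ has any finite quotient of even order then $u=1/2$ is attained. If instead every finite quotient of $G$ has odd order, these orders are still unbounded, so for arbitrarily large $|Q|=2k+1$ Proposition 4.2 supplies $A_0\subseteq Q$ with $|A_0|=k$ and $A_0^2\neq Q$; the pullback then has $|A|=k/(2k+1)$ with $|A^2|<1$, and since $k/(2k+1)\to 1/2$ we again get $u\ge 1/2$, now without attainment. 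Combining with $u\le 1/2$ gives $u=1/2$ in every case. Note that each constructed $A$ lies in $\cal B$, where every acceptable measure agrees with the basic measure, so these constructions are valid for an arbitrary acceptable measure.

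The step I expect to be most delicate is the upper bound for a genuinely acceptable (rather than basic) measure. There $A$ need not belong to $\cal B$, so I cannot collapse the problem to a single finite quotient, and the argument of the first paragraph relies on $gA^{-1}$ being measurable with $|A^{-1}|=|A|$ — a property Proposition 2.6 certifies only for the basic measure. For the basic measure the bound is therefore immediate, but in general I would need inversion-invariance of the measure, or else to restrict the competing sets to symmetric ones $A=A^{-1}$ (which is natural, since the sets produced by Theorem 4.4 are already symmetric); pinning down this point is the real content beyond citing Propositions 4.1, 4.2 and Theorem 4.4.
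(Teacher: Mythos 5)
Your proposal is correct and follows the same route as the paper: the paper's entire justification is the sentence ``From Proposition 4.2 and Theorem 4.4 we immediately have'', i.e.\ exactly your pullback of the odd-order and even-order constructions through finite quotients of unbounded order, with the upper bound $u\le 1/2$ left to the Proposition 4.1 intersection argument. If anything you are more careful than the paper: you actually prove that $G/R_G$ infinite forces finite quotients of unbounded order (the paper takes this for granted), and you justify why the pulled-back sets, lying in ${\cal B}$, have the same measure under any normalised acceptable measure.

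The point you flag in your last paragraph is a genuine gap, but it is a gap in the paper's statement, not in your argument, and you should not search for a missing trick to close it: an acceptable measure is only assumed left (or right) invariant and to extend the basic measure, so $A^{-1}$ need not be measurable, and the bound $u\le 1/2$ can in fact fail. For example, take $G=\mathbb Z$, put $a_k=3\cdot 2^{2^k}$ and $F_k=[a_k,a_k+k]$, and define $||S||$ as the limit of $|S\cap F_k|/(k+1)$ along a suitable non-principal ultrafilter on the indices $k$. This is a finitely additive, translation-invariant measure on all subsets of $\mathbb Z$ extending the basic measure, hence acceptable, and $\mathbb Z/R_{\mathbb Z}=\mathbb Z$ is infinite. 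But $A=\bigcup_k\, [a_k,a_k+\lfloor 0.6k\rfloor]$ has $||A||=0.6>1/2$, while any sum of two elements of $A$ landing in $F_k$ must use one summand from $A\cap F_k$ and one summand of size at most $k$; choosing the ultrafilter to live on those $k$ for which every element of $A$ below $k$ is below $k/20$, the set $A+A$ meets $F_k$ only in $[a_k,a_k+0.66k]$, so $||A+A||\le 0.66<1$ and $u>1/2$ for this acceptable measure. So the corollary is really a statement about the basic measure (or inversion-invariant acceptable measures), which is precisely the case your first paragraph proves via Proposition 2.6; your instinct that the general acceptable case needs inversion invariance or a restriction to symmetric sets is exactly right. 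Compare Proposition 4.9, whose proof is deliberately phrased with $sS$ rather than $gS^{-1}$ so that left invariance alone suffices there; no such rephrasing is available here.
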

We remark that there do exist interesting examples of infinite
residually finite groups where every finite quotient has odd order.
For instance infinite residually finite $p$-groups (for $p\neq 2$)
have this property. In \cite{ls} Section 5.4, a finitely generated
group is said to be of prosoluble type if the group is
residually finite and all finite quotients are soluble. By the Odd
Order Theorem any example of a finitely generated
residually finite group where all
finite quotients have odd order is of prosoluble type. However such a 
group cannot be linear in characteristic 0 if infinite, by \cite{ls}  
Window 9 Corollary 17 so we can say that any infinite
finitely generated linear group in characteristic 0
has a subset which is, by any 
reasonable definition, half of the group
but whose square is not the whole group, indeed the complement of the
squared set is not negligible. We also remark that all the non trivial
finite quotients of the groups in Theorem 4.11 are
of even order, but none are a power of 2.

Similarly we can look at product free subsets. These are subsets $S$
of a group $G$ such that $S\cap S^2$ is empty. 
This definition makes sense for any subset in any group but a well
studied problem in finite groups is to examine how big a product free
subset can be. Therefore we can ask the same question of infinite
groups with an acceptable measure,
where we restrict $S$ to being a non empty
measurable subset.

The survey article \cite{ked1} is a very readable introduction to
the subject in the case of finite groups
and the author provides an update in \cite{ked2}. We
begin by noting that the obvious upper bound of half the group
in the finite case for a product free subset extends immediately
to the infinite measurable case.
\begin{prop}
If $G$ is a group with an acceptable measure
and $S$ is a measurable product free subset then
$|S|\le 1/2$. If $G$ has the basic measure then we obtain
equality if and only if $S=gH$ for $H$ a subgroup
of index 2 and $g\not\in H$.
\end{prop}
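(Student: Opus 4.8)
The plan is to mimic the finite-group argument, using only one-sided invariance for the bound and then Proposition 2.6 to force the extremal configuration to be a coset. For the inequality, fix any $s\in S$ and consider the translate $sS$. Since $S$ is product free we have $sS\subseteq S^2$ and $S\cap S^2=\emptyset$, so $S$ and $sS$ are disjoint; as the measure is acceptable, $sS$ is measurable with $|sS|=|S|$. Hence $|S\cup sS|=2|S|$, and since $S\cup sS\subseteq G$ this gives $2|S|\le|G|=1$, that is $|S|\le 1/2$. If the measure is right invariant rather than left invariant one argues identically with $Ss\subseteq S^2$ in place of $sS$.

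For the equality statement I now assume the basic measure. One direction is a direct check: if $S=gH$ with $H$ a subgroup of index $2$ and $g\notin H$ then $|S|=1/2$, and since $H$ is normal with $g^2\in H$ we get $S^2=gHgH=g^2H=H$, which is disjoint from $gH=S$; thus $S$ is product free and attains the bound.

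The substantive direction is to show that equality forces this shape. The key point is that $|S|=1/2$ turns the inequality above into an equality, so $|S\cup sS|=1=|G|$; since $S\cup sS\subseteq G$, Proposition 2.6 yields $S\cup sS=G$, and with $S\cap sS=\emptyset$ this means $sS=G\setminus S$ for every $s\in S$. In particular $s_1S=s_2S$ for all $s_1,s_2\in S$, so writing $H=\{g\in G:gS=S\}$ for the left stabiliser we get $s_2^{-1}s_1\in H$, whence $S\subseteq s_0H$ for any fixed $s_0\in S$. In the basic measure $S$ is a union of cosets of some normal finite-index subgroup $R$, and normality gives $rS=S$ for $r\in R$, so $R\subseteq H$ and hence $H$ has finite index with $|H|=1/[G:H]$. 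From $S\subseteq s_0H$ and $|S|=1/2$ we get $[G:H]\le 2$; the case $[G:H]=1$ is impossible since $H=G$ would force $S\in\{\emptyset,G\}$, so $[G:H]=2$ and $|H|=1/2=|S|$, giving $S=s_0H$ by Proposition 2.6. Finally $g:=s_0\notin H$, for otherwise $S=H$ would be a subgroup and so not product free. I expect the main obstacle to be this last direction: specifically the passage from the measure equality to the genuine set equality $G=S\sqcup sS$ and the extraction of the index-$2$ subgroup $H$ from the $s$-independence of $sS$; once these are in hand the rest is routine.
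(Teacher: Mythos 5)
Your proof is correct, and while the bound $|S|\le 1/2$ is obtained exactly as in the paper (translate $S$ by $s\in S$, use disjointness from $S^2\supseteq sS$ and invariance), your treatment of the equality case takes a genuinely different route. The paper's proof is very short at this point: from $|S|=1/2$ and product-freeness it deduces $|S^2|=1/2$ (since $sS\subseteq S^2\subseteq G\setminus S$ squeezes the measure), and then invokes Theorem 3.8\,(ii), which says $|S|=|S^2|$ forces $S$ to be a left right coset $gH$; the index-2 and $g\notin H$ conclusions then drop out. That argument leans on the machinery already built in Section 3 (Theorem 3.8 rests on Corollary 3.5, hence on the Ruzsa-distance results and Proposition 3.1). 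Your argument bypasses all of that: you upgrade the measure equality $|S\cup sS|=1$ to the set equality $S\sqcup sS=G$ via Proposition 2.6, observe that $sS=G\setminus S$ is independent of $s\in S$, and extract the left stabiliser $H=\{g:gS=S\}$, which you correctly show has finite index by using the fact that a basic-measurable set is a finite union of cosets of a normal finite-index subgroup $R$ (so $R\le H$). The trade-off is clear: the paper's proof is shorter given its earlier results, while yours is self-contained and more elementary, using only Proposition 2.6 and the structure of measurable sets; you also verify explicitly the easy converse direction ($S=gH$ is product free), which the paper leaves to the reader.
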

\begin{proof}
For any $s\in S$ we have that $S^2$ contains the measurable set $sS$
with $|sS|=|S|$, so if $|S|>1/2$ then $S$ and $sS$ must meet.
Now suppose $G$ has the basic measure and $S$ is product free with $|S|=1/2$, 
thus forcing $|S^2|=1/2$.
We can then apply Theorem 3.8 (ii).
\end{proof}
Another useful point is Observation 1 in \cite{ked1}, which is that
if $N$ is a normal subgroup of $G$ then we can pull back a product free
subset of $G/N$ to one for $G$. This also works for infinite measurable
groups and the measures of the product free subsets will be the same.
Therefore given any infinite group $G$ with an acceptable measure,
we can get a product free subset of
$G$ by looking at its finite quotients, so it is enough to concentrate
on finite abelian and finite simple groups.

For any measurable group $G$ with $|G|=1$
we let $\alpha(G)$ be the supremum of
$|S|$ over all product free subsets $S$ for $G$. It is shown in 
\cite{alkl} that $\alpha(G)\ge 2/7$ for all finite abelian
groups. Therefore we immediately have $\alpha(G)\ge 2/7$ for all
infinite finitely generated groups which are not perfect, because there
will be a non trivial finite abelian quotient. However in the case of finite 
simple groups it was recently shown by Gowers in \cite{gw} that $\alpha(G)$
can be arbitrarily small. Indeed using the above result of Babai,
Nikolov and Pyber, which was influenced by the Gowers paper, we have that
if $A=B$ with size $r$ then $|A^2|>\frac{n}{1+(n^2/mr^2)}$, so if the
right hand side is at least $n-r$ then $A^2$ and $A$ would have to intersect,
meaning that no product free set can be of size $r$. The latter condition
is equivalent to $\lambda^3/(1-\lambda)\ge 1/m$ where $\lambda =r/n$ so
we see that as $m$ tends to infinity we have $\alpha(G)\rightarrow 0$.
In fact this gives a complete answer to the smallest
size of product free sets
for any infinite family of finite groups.
\begin{prop}
Suppose that $G_n$ is a sequence of non trivial
finite groups then we have that
the infimum of $\alpha(G_n)$ is non zero if and only the minimum degree
$m(G_n)$ of a non trivial real representation is bounded.
\end{prop}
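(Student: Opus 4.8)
The plan is to prove the two implications separately, with the reverse implication (bounded $m$ forcing a positive infimum) carrying the real content. For the forward direction I would argue by contraposition: suppose $m(G_n)$ is unbounded, and pass to a subsequence along which $m(G_n)\to\infty$. The inequality of Babai, Nikolov and Pyber from \cite{soda}, already recorded in the discussion above, says that if $S$ has proportion $\lambda=|S|/|G|$ then $S$ cannot be product free once $\lambda^3/(1-\lambda)\ge 1/m(G)$; hence $\alpha(G)\le\lambda_\ast(m(G))$, where $\lambda_\ast(m)$ is the root of $\lambda^3/(1-\lambda)=1/m$. Since $\lambda_\ast(m)\to 0$ as $m\to\infty$, along the subsequence $\alpha(G_n)\to 0$, so the infimum over all $n$ is zero. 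This direction is therefore essentially immediate from the material already in hand.

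For the reverse direction I would fix a bound $m(G_n)\le M$ and produce a lower bound $\alpha(G_n)\ge c(M)>0$ uniform in $n$. Each $G:=G_n$ carries a non trivial real representation $\rho$ of degree at most $M$, and on setting $Q=G/\ker\rho$ we obtain a finite group with a faithful linear representation of degree at most $M$. The key structural input is the classical theorem of Jordan on finite linear groups: $Q$ contains an abelian normal subgroup $A$ whose index $[Q:A]$ is bounded by a function $J(M)$ depending only on $M$. Thus $Q/A$, which is also a quotient of $G$, is a finite group of order at most $J(M)$.

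It then remains to manufacture a product free subset from this structure and pull it back, using Observation 1 of \cite{ked1} (pullback along a normal subgroup preserves the measure of a product free set), noted earlier to hold in the measurable setting. If $Q/A$ is trivial then $Q=A$ is a non trivial abelian group, and the bound $\alpha\ge 2/7$ of \cite{alkl} gives $\alpha(G)\ge\alpha(Q)\ge 2/7$. If $Q/A$ is non trivial then it is a finite group of order at most $J(M)$, and any singleton $\{x\}$ with $x\neq e$ is product free there, so $\alpha(G)\ge\alpha(Q/A)\ge 1/J(M)$. Either way $\alpha(G)\ge\min(2/7,1/J(M))=:c(M)>0$, uniformly over all groups with $m\le M$, which yields a positive infimum.

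I expect the main obstacle to be locating the right structural statement for the reverse direction: the passage from ``bounded minimal representation degree'' to ``abelian-by-(boundedly many cosets)'' is exactly what Jordan's theorem supplies, and once one has either an abelian quotient or a quotient of bounded order the construction of a product free set is routine. A minor point to verify is that a real representation suffices for Jordan, which is clear since $O(M)\subseteq GL(M,\mathbb{C})$, and that the pullback genuinely preserves proportions in the normalised measure.
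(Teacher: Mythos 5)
Your proof is correct and uses the same key ingredients as the paper in both directions: the Babai--Nikolov--Pyber inequality for the unbounded case (handled identically), and Jordan's theorem plus the Alon--Kleitman bound $\alpha\ge 2/7$ for abelian groups plus the pullback observation for the bounded case. The one genuine point of divergence is how the Jordan subgroup is exploited. The paper takes an abelian subgroup $H_n\le Q_n$ of index at most $j$, with no normality required: if $H_n$ is non trivial, a product free subset $S_n\subseteq H_n$ of proportion at least $2/7$ inside $H_n$ is automatically product free as a subset of $Q_n$ (since $S_n^2\subseteq H_n$), giving proportion at least $2/(7j)$ in $Q_n$, and this is pulled back to $G_n$; if $H_n$ is trivial, then $Q_n$ itself has order at most $j$ and a non identity singleton serves. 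You instead invoke the normal form of Jordan's theorem and split on whether $Q/A$ is trivial: either $Q$ is non trivial abelian, giving $2/7$ directly, or $Q/A$ is a non trivial quotient of $G$ of order at most $J(M)$, from which you pull back a singleton (using that the preimage of $A$ is normal in $G$). Both case analyses are airtight; yours needs the normality clause in Jordan's theorem but yields the slightly cleaner uniform constant $\min(2/7,\,1/J(M))$ in place of the paper's $2/(7j)$, whereas the paper's route works with any abelian subgroup of bounded index and needs only a product-free-in-a-subgroup observation rather than a second quotient.
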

\begin{proof}
If $m(G_n)$ is unbounded then we can apply the result of Babai,
Nikolov and Pyber. However if $m(G_n)$ is bounded above by $N$ then
every $G_n$ has a non trivial image $Q_n$ in $GL(N,\mathbb R)$. By Jordan's
Theorem there exists $j$ depending only on $N$ 
such that $Q_n$ has an abelian subgroup $H_n$ of index at most $j$. Either
$H_n$ is trivial so that $1<|Q_n|\le j$, or we have a product free subset
$S_n$ of $H_n$, so also of $Q_n$ with $|S_n|/|Q_n|\ge 2/(7j)$. In both
cases we pull the product free subset of $Q_n$ back to $G_n$.   
\end{proof}

We can answer the equivalent question for infinite measurable groups by
using the construction in \cite{kn}.
\begin{thm}
There exist infinite finitely generated residually finite groups
equipped with the basic measure such that $\alpha(G)$ is arbitrarily small.
\end{thm}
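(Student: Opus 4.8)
The plan is to reduce the statement entirely to the finite case treated in Proposition 4.13 and then feed a carefully chosen family of finite simple groups into the construction of \cite{kn}. First I would record the reduction to finite quotients. Since $G$ carries the basic measure, every measurable set is the pullback of a subset of some finite quotient $G/R$ with $R\unlhd_f G$, and by the pullback remark following Proposition 4.12 a set $S$ is product free in $G$ if and only if its image $\bar S$ is product free in $G/R$, with $|S|=|\bar S|/[G:R]$. Hence $\alpha(G)=\sup_Q\alpha(Q)$, the supremum ranging over all finite quotients $Q=G/R$. So to force $\alpha(G)$ small it is enough to force $\alpha(Q)$ uniformly small over all nontrivial finite quotients $Q$ of $G$.

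Next I would invoke the Babai--Nikolov--Pyber bound already derived before Proposition 4.13: a finite group $Q$ of order $n$ admits no product free subset of density $\lambda$ once $\lambda^3/(1-\lambda)\ge 1/m(Q)$. Writing $f(m)$ for the unique root $\lambda_0\in(0,1)$ of $\lambda^3/(1-\lambda)=1/m$ (the left side being increasing from $0$ to $\infty$), this gives $\alpha(Q)\le f(m(Q))$, where $f$ depends only on $m(Q)$ and $f(m)\to 0$ as $m\to\infty$. Thus, given $\epsilon>0$, I fix $M$ with $f(M)<\epsilon$; it then remains to produce a finitely generated infinite residually finite group $G$ all of whose nontrivial finite quotients $Q$ satisfy $m(Q)\ge M$.

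For the construction I would use the alternating groups. Since $m(Alt(n))\to\infty$, there is $N_0\ge 7$ with $m(Alt(n))\ge M$ for all $n\ge N_0$, and the groups $\{Alt(n):n\ge N_0\}$ are pairwise nonisomorphic nonabelian simple groups of strictly increasing order. By \cite{kn} the Cartesian product $\hat G=\prod_{n\ge N_0}Alt(n)$ is the profinite completion of a finitely generated residually finite group $G$; being residually finite with infinite completion, $G$ is itself infinite. Every finite quotient of $G$ is a continuous finite quotient of $\hat G$, and an open finite-index subgroup of $\hat G$ contains all but finitely many factors, so such a quotient factors through a finite product $\prod_{n\in F}Alt(n)$. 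As these factors are nonabelian simple, their product has only subproducts as normal subgroups, whence $Q\cong\prod_{n\in F'}Alt(n)$ and $m(Q)=\min_{n\in F'}m(Alt(n))\ge M$. Therefore $\alpha(Q)\le f(M)<\epsilon$ for every nontrivial finite quotient, and combining with the reduction gives $\alpha(G)=\sup_Q\alpha(Q)\le f(M)<\epsilon$, as required.

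The main obstacle is the group-theoretic input: one needs a finitely generated residually finite group whose finite quotients are all forced to be products of simple groups with large minimal representation degree, and this is precisely what \cite{kn} supplies. It is essential that the simple factors genuinely grow, since an infinite power of a single finite simple group is not topologically finitely generated; using the increasing family $\{Alt(n)\}$ circumvents this. The two points that need verification are that this family meets the hypotheses of the Kassabov--Nikolov theorem and that the continuous finite quotients of a Cartesian product of pairwise nonisomorphic nonabelian simple groups are exactly the finite subproducts.
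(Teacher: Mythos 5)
Your proposal is correct and follows essentially the same route as the paper: both reduce $\alpha(G)$ to the product-free densities of the finite quotients, apply Theorem 4 of \cite{kn} to a product $\prod_{n\ge N_0}Alt(n)$, show that every finite quotient of the resulting finitely generated residually finite group is a finite subproduct of those alternating groups, and then invoke the Babai--Nikolov--Pyber bound to make each such quotient's product-free density small. The only cosmetic differences are that you fix the target $\epsilon$ in advance (rather than parametrising by $K$ and letting $K\to\infty$) and that you state the reduction $\alpha(G)=\sup_Q \alpha(Q)$ explicitly where the paper leaves it implicit.
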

\begin{proof}
It is shown in Theorem 4 of \cite{kn}
that if $\Gamma=\Pi_{n=1}^\infty S_n$ is
an unrestricted direct product of finite simple groups $S_n$ with 
$l(S_n)\rightarrow\infty$, where $l(S_n)$ is the largest integer $l$ such that
$S_n$ contains the alternating group $Alt(l)$, then $\Gamma$ is the profinite
completion $\hat{G}$ of a finitely generated residually finite group
$G$. We suppose that all $S_n$ are distinct and let $\Delta$ be a finite
index normal open subgroup of $\Gamma$. Then for each $n$ we must have
that $\Delta\cap S_n$ is equal to $S_n$ or $I$, but $\Delta\cap S_n=I$
means that $S_n$ is a subgroup of the finite group $\Gamma/\Delta$, thus
this can only happen for finitely many $n$. As $S_n$ is in $\Delta$ for
all $n>N$ say and $\Delta$ is closed, we have that there is a
surjective homomorphism $\theta$ from $S_1\times\ldots\times S_N$ to
$\Gamma/\Delta$. However this surjective image of a finite direct
product of distinct finite simple groups can only be
$S_{i_1}\times\ldots\times S_{i_k}$ for $1\le i_1<\ldots <i_k\le N$
where $\mbox{ker}(\theta)\cap S_{i_j}=I$ and $\mbox{ker}(\theta)\cap S_m=S_m$
otherwise. This is because $\theta(S_m)$ is normal in in 
$\theta(S_1\times\ldots\times S_N)$, so for $m\neq n$ we have
$\theta(S_m)\cap\theta(S_n)$ is normal in $\theta(S_m)$ and $\theta(S_n)$,
implying that $\theta(S_m)\cap\theta(S_n)=I$ and 
$\theta(S_1\times\ldots\times S_N)\cong\theta(S_1)
\times\ldots\times\theta(S_N)$.

Now suppose that $G$ is the finitely generated residually finite group
with $\hat{G}=\Gamma$ as above. As any (continuous) homomorphism from
$G$ onto a finite group $F$ extends continuously to $\Gamma$, we have
that $F$ must be of the form $S_{i_1}\times\ldots\times S_{i_k}$ too. We
now take any integer $K>0$ and apply Theorem 4 of \cite{kn} with
$\Gamma_K=\Pi_{n=K}^\infty Alt(n)$ where certainly $l(Alt(n))\rightarrow\infty$
as $n$ does. Then the only finite quotients $F$ of the finitely generated
residually finite group $G_K$ are direct products of alternating groups
of rank at least $K$, 
but we have $m(Alt(n))\ge n-1$ for $n\ge 7$
so $\alpha^3(Alt(n))\le 1/m(Alt(n))\le 1/(K-1)$ for
$n\ge K$. This implies that $\alpha(F)\le 1/(K-1)^{1/3}$ too, because
if $F$ had a non trivial representation of dimension less than $K-1$
then this would be non trivial on one of the direct factors.
Consequently $\alpha(G_K)\le 1/(K-1)^{1/3}$ and this tends to 0 as
$K$ tends to infinity.  
\end{proof}

\section{Ruzsa distance on finite index subgroups}

Given any group $G$, we have from Corollary 3.7 that Ruzsa distance
is a genuine metric on the set ${\cal S}(G)$ of finite index subgroups of $G$. 
Here
we will use the double distance formula, so that for subgroups $A$ and
$B$ of $G$ we have
\[\mbox{d}(A,B)=\mbox{log}\frac{|AB^{-1}|}{|A|^{1/2}|B|^{1/2}}
+\mbox{log}\frac{|BA^{-1}|}{|A|^{1/2}|B|^{1/2}}
=\mbox{log}\frac{[G:A][G:B]}{[G:AB]^2}.\]
In order to avoid countless mentions of logs, we
will consider the multiplicative version of Ruzsa distance; that is
\[\mbox{e}(A,B)=\mbox{exp}^{\mbox{d}(A,B)}=\frac{[G:A][G:B]}{[G:AB]^2}\]
with e satisfying $\mbox{e}(A,B)=\mbox{e}(B,A)$, $\mbox{e}(A,B)\ge 1$
with equality if and only if $A=B$, and $\mbox{e}(A,B)\le\mbox{e}(A,C)
\mbox{e}(C,B)$ for any $C\le_f G$.

Now $AB$ need not be a subgroup but we can define 
the ``index'' $[G:AB]$ of $AB$ in $G$;
if $AB$ consists of $k$ cosets of $A\cap B$ in $G$ then
$[G:AB]=k/[G:A\cap B]$. This need not be an integer (consider 2-cycles
in the symmetric group $S_3$ for instance), so that it is not clear
from this formula that $\mbox{e}(A,B)\in\mathbb N$ but we have that
$[A:A\cap B]=[AB:B]$, because if $AB$ is a union of $j$ disjoint cosets
$a_1B,\ldots ,a_jB$ then $a_1(A\cap B),\ldots ,a_j(A\cap B)$ are
disjoint cosets with union $A$. Thus we also have
\[\mbox{e}(A,B)=\frac{[G:A\cap B]^2}{[G:A][G:B]}=[A:A\cap B][B:A\cap B].\]
\begin{prop}
The metric d induces the discrete topology on ${\cal S}(G)$.
\end{prop}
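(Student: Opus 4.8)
The plan is to exploit the integrality of the multiplicative distance $\mathrm{e}$. I would start from the formula derived immediately above, namely $\mathrm{e}(A,B)=[A:A\cap B][B:A\cap B]$, which exhibits $\mathrm{e}(A,B)$ as a product of two positive integers; both indices are finite because $A$ and $B$, and hence $A\cap B$, have finite index in $G$.

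The key step is to note that if $A\neq B$ then at least one of these two factors is at least $2$, since if both $[A:A\cap B]$ and $[B:A\cap B]$ equalled $1$ we would have $A=A\cap B=B$. Thus $\mathrm{e}(A,B)\geq 2$ whenever $A\neq B$, whereas $\mathrm{e}(A,A)=1$. Translating back through $\mathrm{d}=\log\mathrm{e}$, this says that any two distinct finite index subgroups satisfy $\mathrm{d}(A,B)\geq\log 2$, so $(\mathcal{S}(G),\mathrm{d})$ is uniformly discrete. In particular the open ball of radius $\log 2$ about any $A$ contains $A$ alone, every singleton is therefore open, and the induced topology is discrete.

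There is essentially no obstacle here once the index formula is in hand: the entire content is that $\mathrm{e}$ takes values in the positive integers, so the gap between its minimum value $1$ (attained only on the diagonal) and the next possible value is forced by integrality, giving a lower bound on distances between distinct points that is uniform over all of $\mathcal{S}(G)$. The only point one should check carefully is that both indices are genuinely finite, which is immediate from $A,B\in\mathcal{S}(G)$.
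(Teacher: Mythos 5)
Your proof is correct and is essentially the paper's own argument: the paper likewise observes that $\mathrm{d}$ takes values in $\log(\mathbb{N})$ (equivalently, that $\mathrm{e}(A,B)=[A:A\cap B][B:A\cap B]$ is a positive integer equal to $1$ only when $A=B$), so distinct subgroups are at least $\log 2$ apart. You simply spell out the integrality and the ``both indices equal $1$ forces $A=B$'' step in more detail than the paper does.
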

\begin{proof}
We have that $\mbox{d}(A,B)$ takes on values in $\mbox{log}(\mathbb N)$ so
any two distinct points are at least $\mbox{log}\,2$ apart.
\end{proof}
We also need to see when the triangle inequality becomes equality.
\begin{prop} If $A,B,C\le_f G$ then we have $\mbox{e}(A,C)
\mbox{e}(C,B)=\mbox{e}(A,B)$ if and only if $C$ contains the subgroup 
$A\cap B$ and $C$ is equal to $(C\cap A)(C\cap B)$.
\end{prop}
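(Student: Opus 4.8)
The plan is to reduce the multiplicative triangle inequality to a statement about ordinary subgroup indices inside $C$, and then to read off the equality case from the single elementary bound $[PQ:Q]\le[C:Q]$. First I would substitute the formula $\mbox{e}(A,B)=[G:A\cap B]^2/([G:A][G:B])$ (and its analogues for the pairs $(A,C)$ and $(C,B)$) into the proposed identity $\mbox{e}(A,C)\mbox{e}(C,B)=\mbox{e}(A,B)$. After cancelling the common factor $1/([G:A][G:B])$ and taking positive square roots, the condition becomes
\[\frac{[G:A\cap C]\,[G:C\cap B]}{[G:C]}=[G:A\cap B].\]
Since $A\cap C\le C$ and $C\cap B\le C$ I can write $[G:A\cap C]=[G:C][C:A\cap C]$ and likewise for $B$, so this simplifies to
\[[G:C]\,[C:A\cap C]\,[C:C\cap B]=[G:A\cap B],\]
an equation all of whose indices are genuine integers, so that the fractional quantity $[G:AB]$ never enters.

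The delicate point is that $A\cap B$ need not be contained in $C$, so I would pass to the common refinement $A\cap B\cap C$. Computing $[G:A\cap B\cap C]$ two ways, through $A\cap B$ and through $C$, gives
\[[G:A\cap B]=\frac{[G:C]\,[C:A\cap B\cap C]}{[A\cap B:A\cap B\cap C]}.\]
Substituting this into the previous display and cancelling $[G:C]$ turns the whole condition into
\[[C:A\cap C]\,[C:C\cap B]\,[A\cap B:A\cap B\cap C]=[C:A\cap B\cap C].\]

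Now I set $P=A\cap C$ and $Q=B\cap C$, both subgroups of $C$ with $P\cap Q=A\cap B\cap C$. The identity $[P:P\cap Q]=[PQ:Q]$ quoted just before the proposition, applied inside $C$, gives $[C:P\cap Q]=[C:P]\,[PQ:Q]$; cancelling $[C:P]=[C:A\cap C]$ then reduces the equation to
\[[C:Q]\,[A\cap B:A\cap B\cap C]=[PQ:Q].\]
Here the equality case drops out. Since $PQ\subseteq C$ is a union of left cosets of $Q$, one always has $[PQ:Q]\le[C:Q]$, with equality exactly when $PQ=C$; and $[A\cap B:A\cap B\cap C]\ge 1$, with equality exactly when $A\cap B\subseteq C$. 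The last equation therefore forces both factors on the left to their extreme values, namely $A\cap B\subseteq C$ and $(C\cap A)(C\cap B)=C$, while conversely these two conditions make the equation hold. The main obstacle is thus not any hard inequality but the bookkeeping of the preceding paragraph: keeping $[G:AB]$ out of the argument and correctly relating $[G:A\cap B]$ to $C$ via $A\cap B\cap C$, after which the equality condition is simply the equality case of $[PQ:Q]\le[C:Q]$.
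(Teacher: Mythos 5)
Your proof is correct and takes essentially the same route as the paper's: both arguments cancel, take positive square roots, and reduce the equality $\mbox{e}(A,C)\mbox{e}(C,B)=\mbox{e}(A,B)$ to an integer index identity over the triple intersection $X=A\cap B\cap C$, and both then finish with the coset-counting identity for the product $(C\cap A)(C\cap B)$ together with its containment in $C$. The only divergence is cosmetic and lies in the endgame: the paper interprets the two sides of $[C:X][A\cap B:X]=[C\cap A:X][C\cap B:X]$ as the number of $X$-cosets in the sets $C(A\cap B)\supseteq C\supseteq (C\cap A)(C\cap B)$ and concludes that all three sets coincide, whereas you split the same information into the two inequalities $[PQ:Q]\le [C:Q]$ and $[A\cap B:A\cap B\cap C]\ge 1$ and force each to be an equality.
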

\begin{proof}

We would have  
\[[A:A\cap C][C:A\cap C][B:B\cap C][C:B\cap C]=[A:A\cap B][B:A\cap B].\]
Now let $X=A\cap B\cap C$ and let us multiply top and bottom of the left
hand side by $[Y\cap Z:X]$ for the various intersections $Y\cap Z$ 
of subgroups that appear on this side,
and the same for the right. This gives us
\[[C:X]^2/([A\cap C:X]^2[B\cap C:X]^2)=1/[A\cap B:X]^2\]
so we can take square roots to get $[C:X][A\cap B:X]=[C\cap A:X][C\cap B:X]$.
The right hand side is equal to $[(C\cap A)(C\cap B):X]$ and the left hand side
gives us $[C(A\cap B):X]$. But $C$ is always contained in $C(A\cap B)$ and
itself contains $(C\cap A)(C\cap B)$ so all three must be equal.
\end{proof}
Note that if $A\le C\le B$ for $A$ (and $B,C$) finite index subgroups of
$G$ then $\mbox{d}(A,C)+\mbox{d}(C,B)=\mbox{d}(A,B)$.

We can turn ${\cal S}(G)$ into a graph with a natural metric by joining
the finite index subgroups $A$ to $B$ with an edge if $B$ is a maximal 
subgroup of $A$. We then
regard the edge as an isometric copy of the interval $[0,\mbox{d}(A,B)]$.
This gives us
\begin{prop}
For any finite index subgroups $A$ and $B$ there exists a straight line
in ${\cal S}(G)$ between $A$ and $B$ - that is an isometry
$f:[0,\mbox{d}(A,B)]\rightarrow {\cal S}(G)$ with $f(0)=A$ and
$f(\mbox{d}(A,B))=B$.
\end{prop}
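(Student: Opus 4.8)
The plan is to build an explicit geodesic by descending from each of $A$ and $B$ to their intersection $A\cap B$ and gluing the two halves together. First I would record the two metric facts that make this work. Since the triangle inequality $\mbox{e}(A,B)\le\mbox{e}(A,C)\mbox{e}(C,B)$ holds for every $C\le_f G$, the length $\sum_i\mbox{d}(V_i,V_{i+1})$ of any edge-path $A=V_0,V_1,\ldots,V_n=B$ in ${\cal S}(G)$ is at least $\mbox{d}(A,B)$; hence the path metric of the graph dominates $\mbox{d}$ on the vertices. Conversely, from the index formula $\mbox{e}(A,B)=[A:A\cap B][B:A\cap B]$ one reads off $\mbox{e}(A,A\cap B)=[A:A\cap B]$ and $\mbox{e}(A\cap B,B)=[B:A\cap B]$, whence
\[\mbox{d}(A,A\cap B)+\mbox{d}(A\cap B,B)=\mbox{d}(A,B),\]
so $A\cap B$ lies on a $\mbox{d}$-geodesic between $A$ and $B$.

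Next I would realise each of these two segments by an honest edge-path. As $A\cap B$ has finite index in $A$, the interval $\{H:A\cap B\le H\le A\}$ of the subgroup lattice is finite, so it carries a maximal chain $A=A_0>A_1>\cdots>A_k=A\cap B$. Because any subgroup lying strictly between two members of this interval is again a member, each covering relation $A_{i+1}<A_i$ within the interval is a genuine maximal-subgroup relation in $G$, hence an edge of ${\cal S}(G)$. The remark following Proposition 5.2, namely the totally ordered case $A_l\le A_j\le A_i$ of equality in the triangle inequality, applies all along the chain, so the distances add exactly: $\sum_{i=0}^{k-1}\mbox{d}(A_i,A_{i+1})=\mbox{d}(A,A\cap B)$. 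The identical construction inside $\{H:A\cap B\le H\le B\}$ produces an edge-path from $B$ down to $A\cap B$ of length $\mbox{d}(A\cap B,B)$.

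Concatenating the two edge-paths gives a path in ${\cal S}(G)$ from $A$ to $B$ of length $\mbox{d}(A,A\cap B)+\mbox{d}(A\cap B,B)=\mbox{d}(A,B)$. Together with the lower bound from the first paragraph this shows the graph distance between $A$ and $B$ equals $\mbox{d}(A,B)$ and that the path just built is a shortest one. Finally I would parametrise it by arc length to get $f:[0,\mbox{d}(A,B)]\to{\cal S}(G)$: for $0\le s\le t\le\mbox{d}(A,B)$ restricting the path gives $\mbox{d}(f(s),f(t))\le t-s$, while $\mbox{d}(A,B)\le s+\mbox{d}(f(s),f(t))+(\mbox{d}(A,B)-t)$ forces $\mbox{d}(f(s),f(t))\ge t-s$, so $f$ is the desired isometry.

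I expect the only genuine subtlety to be this last step, that a shortest edge-path once parametrised by arc length is globally isometric rather than merely locally so. It is precisely the exact additivity furnished by Proposition 5.2, as opposed to a bare triangle inequality, that secures this and rules out the constructed path doubling back on itself.
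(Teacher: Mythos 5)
Your proof is correct and follows essentially the same route as the paper: descend from each of $A$ and $B$ to $A\cap B$ along a maximal chain of subgroups (whose covering relations are edges of ${\cal S}(G)$), use the exact additivity of $\mbox{d}$ along nested chains and the equality $\mbox{d}(A,A\cap B)+\mbox{d}(A\cap B,B)=\mbox{d}(A,B)$ from Proposition 5.2, and concatenate. The only difference is that you make explicit what the paper leaves implicit — the lower bound that every edge-path has length at least $\mbox{d}(A,B)$, and the check that the arc-length parametrisation is a global isometry — which is a worthwhile tightening rather than a different approach.
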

\begin{proof}
We have subgroups $H_0,H_1,\ldots ,H_n$ such that
$A\cap B=H_0<H_1<\ldots <H_n=A$ and we can take this to be a maximal
chain because each subgroup has finite index in the one above. This means
we have edges joining $H_0$ to $H_1$,$\ldots$,$H_{n-1}$ to $H_n$ and
as $\mbox{d}(A\cap B,A)=\mbox{d}(H_0,H_1)+\ldots +\mbox{d}(H_{n-1},H_n)$
by Proposition 5.2,
we have that the join of these edges has length $\mbox{d}(A,A\cap B)$.
We can do the same with $B$ in place of $A$ and then put these two
joins together, with $\mbox{d}(A,A\cap B)+\mbox{d}(A\cap B,B)$ also equal
to $\mbox{d}(A,B)$ by Proposition 5.2.
\end{proof}

However the mere existence of a graph possessing a metric and with vertices
in bijective correspondence with the finite index subgroups of a group
$G$ is not special unless we can say something about how $G$ acts. We
would like an isometric action; even better would be a faithful
isometric action. The first is easy to ensure.
\begin{prop}
The group $G$ acts by conjugation on the graph ${\cal S}(G)$ and this action
is isometric.
\end{prop}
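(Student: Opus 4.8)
The plan is to verify two things: that $G$ acts on the graph $\mathcal{S}(G)$ by conjugation, and that this action respects the metric $\mathrm{d}$. The conjugation action on the vertex set is the obvious one: for $g \in G$ and $A \le_f G$, set $g \cdot A = gAg^{-1}$. First I would check that this is a well-defined action on vertices, which is routine: conjugation is an automorphism of $G$, so $gAg^{-1}$ is again a subgroup, and it has the same index as $A$ since $x \mapsto gxg^{-1}$ is a bijection of $G$ carrying cosets of $A$ to cosets of $gAg^{-1}$. Thus $gAg^{-1} \le_f G$ and lies in $\mathcal{S}(G)$, and the associativity $g \cdot (h \cdot A) = (gh) \cdot A$ is immediate.

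Next I would show the action sends edges to edges, so that it really is a graph action and not merely an action on the vertex set. Edges join a subgroup to its maximal subgroups; since conjugation by $g$ is an automorphism of $G$, it preserves the lattice of subgroups and in particular the maximality relation. So if $B$ is a maximal subgroup of $A$ then $gBg^{-1}$ is a maximal subgroup of $gAg^{-1}$, giving an edge between the images.

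The heart of the matter is that the action is isometric, and the cleanest route is through the multiplicative formula
\[
\mbox{e}(A,B)=\frac{[G:A][G:B]}{[G:AB]^2}
\]
established above. The key observation is that conjugation by $g$ carries the product set $AB$ to $(gAg^{-1})(gBg^{-1}) = g(AB)g^{-1}$, and since $x \mapsto gxg^{-1}$ is a bijection of $G$, it preserves all the indices appearing in the formula: $[G:gAg^{-1}]=[G:A]$, $[G:gBg^{-1}]=[G:B]$, and $[G:g(AB)g^{-1}]=[G:AB]$. Hence $\mbox{e}(gAg^{-1},gBg^{-1})=\mbox{e}(A,B)$, and taking logarithms gives $\mathrm{d}(gAg^{-1},gBg^{-1})=\mathrm{d}(A,B)$. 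Finally, since each edge was defined as an isometric copy of an interval of length $\mathrm{d}(A,B)$, preserving these edge lengths means $g$ extends to an isometry of the whole metric graph (it maps each edge isometrically onto the corresponding image edge). The only point needing any care is the remark that $[G:AB]$ is preserved even though $AB$ need not be a subgroup; but this is immediate from the fact that $AB$ is a union of cosets of $A\cap B$ and conjugation permutes these cosets bijectively, so the count defining $[G:AB]$ is unchanged. I do not anticipate a genuine obstacle here — everything follows from conjugation being an index-preserving bijection.
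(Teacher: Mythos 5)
Your proposal is correct and follows essentially the same route as the paper: conjugation is an index-preserving automorphism of $G$, so it preserves the finite-index property, the maximality relation defining edges, and all the indices entering the Ruzsa distance, hence edge lengths. The paper's own proof is just a terser version of this (it only records index and maximality preservation), while you additionally spell out the preservation of $\mbox{e}(A,B)$ via $g(AB)g^{-1}=(gAg^{-1})(gBg^{-1})$, which is a harmless elaboration of the same idea.
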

\begin{proof}
We have that if $H\le_f G$ then $gHg^{-1}$ is a subgroup of the same
index, and if $L$ is a maximal subgroup of $H$ then $gLg^{-1}$ is maximal
in $gHg^{-1}$ so they are still joined by an edge of the same length.
\end{proof}

However there are two cases where it is clear that we will have elements
acting trivially: the first is 
that any element in the finite residual
$R_G$ would also be in $N(H)$ for any $H\le_f G$, and hence in 
the kernel $K(G)$ of our action.
Therefore we will say without loss of generality that $G$ is residually
finite by replacing it with $G/R_G$, whereupon
the metric space ${\cal S}(G/R_G)$ is equal to ${\cal S}(G)$. The second 
case is when $G$ has a non trivial centre $Z(G)$ which will
act trivially, so the whole action is trivial if $G$ is an abelian
group. It would be good if $K(G)$, which is the
intersection of the normalisers $N(H)$ over all finite index subgroups
$H$, were equal to the centre when $G$ is residually finite,
but a moment's reflection on finite groups reminds us that every subgroup
of the (non abelian) Quaternion group of order 8 is normal, so here too
is a trivial action. 

In fact this hope is not too far off the truth. For although we have
not seen the group $K(G)$ in the literature, there is a very similar
concept which reduces to $K(G)$ when $G$ is finite. The Baer norm
$B(G)$ is defined to be the intersection $N(H)$ of normalisers over
all subgroups $H$ of $G$. The crucial fact is in \cite{schenk} from
1960: that for any group $G$ the Baer norm $B(G)$ is in the
second centre $Z_2(G)$, which is
formed by quotienting $G$ by its centre and taking the pullback of the 
centre of this quotient $G/Z(G)$ to get $Z_2(G)$.

This has an immediate application to our situation:
\begin{prop}
Let $G$ be a group which is residually finite.
Then the kernel $K(G)$ of our action is contained in $Z_2(G)$.
In particular the action is faithful if and only if
$G$ is residually finite with trivial centre.
\end{prop}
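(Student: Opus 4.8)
The plan is to bridge the gap between $K(G)$, which only sees finite index subgroups, and the Baer norm $B(G)$, which sees all subgroups, by passing to the finite quotients of $G$ and reassembling via residual finiteness. Recall that $K(G)=\bigcap_{H\le_f G}N(H)$. The key observation is that for any $N\unlhd_f G$, the subgroups of the finite group $G/N$ are exactly the images of those finite index subgroups of $G$ that contain $N$; so if $g\in K(G)$ then its image $\bar g$ in $G/N$ normalises $H/N$ for every such $H$, and hence normalises every subgroup of $G/N$. That is, $\bar g\in B(G/N)$, which for the finite group $G/N$ coincides with $K(G/N)$, and Schenkman's result quoted above gives $B(G/N)\le Z_2(G/N)$.

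First I would fix $g\in K(G)$ and translate the membership $\bar g\in Z_2(G/N)$ into the double-commutator condition that $[[g,x],y]\in N$ for all $x,y\in G$. Then, since $G$ is residually finite, $\bigcap_{N\unlhd_f G}N=R_G=\{1\}$, so intersecting over all such $N$ forces $[[g,x],y]=1$ for all $x,y$; this is precisely the statement that $[g,x]\in Z(G)$ for every $x$, i.e.\ $g\in Z_2(G)$. This establishes $K(G)\le Z_2(G)$.

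For the faithfulness characterisation, note that the action is faithful exactly when $K(G)=\{1\}$. If $G$ is not residually finite then $R_G\neq\{1\}$, and since $R_G\le H\le N(H)$ for every $H\le_f G$ we have $R_G\le K(G)$, so the action is not faithful; similarly if $Z(G)\neq\{1\}$ then $Z(G)\le N(H)$ for all $H$ gives $Z(G)\le K(G)$, and again faithfulness fails. Conversely, if $G$ is residually finite with $Z(G)=\{1\}$ then $Z_2(G)=Z(G)=\{1\}$, so the first part yields $K(G)\le Z_2(G)=\{1\}$ and the action is faithful.

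The main obstacle is the first, structural step: neither Baer's nor Schenkman's theorem applies directly to $G$, because $K(G)$ ignores infinite index subgroups, so the whole argument hinges on recognising that each finite quotient $G/N$ restores visibility of all of its subgroups, and that residual finiteness lets the local conclusions $[[g,x],y]\in N$ be glued into a global identity. Once this correspondence is in place, what remains is the routine commutator bookkeeping that identifies $Z_2(G/N)$ with the appropriate double-commutator condition.
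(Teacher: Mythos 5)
Your proof is correct and follows essentially the same route as the paper's: both rest on Schenkman's theorem $B(Q)\le Z_2(Q)$ applied to finite quotients, the correspondence between subgroups of $G/N$ and finite index subgroups of $G$ containing $N$, and residual finiteness to pass from the double-commutator condition $[[g,x],y]\in N$ to a global identity. The only difference is that you argue directly (intersecting over all finite quotients) while the paper argues the contrapositive (choosing one finite quotient in which a fixed nontrivial $[[g,x],y]$ survives), which is a purely logical reorganisation of the same argument.
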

\begin{proof} Given $g$ in $G$, if $g\notin Z_2(G)$  then there
exists $x\in G$ such that the commutator $[g,x]=gxg^{-1}x^{-1}$ is
non trivial in $G/Z(G)$.
Thus there further exists some $y\in G$ such that
the element $z=[[g,x],y]\ne e$ in $G$. Now take a finite quotient 
$Q$ of $G$ with (the image of) $z$ non trivial in $Q$. If $g$ were in
the second centre $Z_2(Q)$ then $[g,x]Q$ is in the centre of $Q$, a
contradiction. Thus $gQ\notin B(Q)$, giving a subgroup 
$S$ of $Q$ with $gSg^{-1}\neq S$. Now pull back to obtain a finite index
subgroup $H$ of $G$ with $gHg^{-1}\neq H$.
\end{proof}

The last part of Proposition 5.5 is relying on the fact that if $g\in K(G)$
and $Q$ is any quotient of $G$ then (the image of) $g$ is in $K(Q)$ too
by the correspondence theorem. Thus for residually finite groups $G$
we have $B(G)$ is trivial if and only if $K(G)$ is trivial too, which
occurs if and only if $G$ has trivial centre. 

\begin{co}
If $G$ is a finitely generated group
then we have a faithful isometric action of $G/K(G)$ on the
connected graph ${\cal S}(G)$ such that every point has a finite orbit.
Moreover $G/K(G)$ is residually finite.
\end{co}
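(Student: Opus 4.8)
The plan is to assemble the statement from results already available, the one genuinely new ingredient being the residual finiteness of $G/K(G)$. First I would record the action itself: Proposition 5.4 gives an isometric conjugation action of $G$ on $\mathcal{S}(G)$, and since $K(G)$ is by definition the kernel of this action, the induced action of $Q := G/K(G)$ is automatically faithful, and it remains isometric because it factors through the isometric $G$-action. Connectedness of the graph needs no extra work: Proposition 5.3 produces a straight line between any two vertices, which is in particular a path in $\mathcal{S}(G)$, so the graph is path-connected.

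Next I would verify the finite-orbit property. The observation driving everything is that a finite index subgroup $H$ lies in its own normaliser, so $[G:N(H)] \le [G:H] < \infty$; as the stabiliser of the vertex $H$ under conjugation is exactly $N(H)$, the orbit of $H$ is finite. For an interior point $p$ of the edge joining $H$ to a maximal subgroup $M$, the stabiliser of $p$ contains $N(H) \cap N(M)$, which is again of finite index, so $p$ too has finite orbit. Hence every point of $\mathcal{S}(G)$, vertex or not, has a finite orbit.

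The residual finiteness of $Q$ then follows by combining faithfulness with the finite-orbit property, and this is the step I would present most carefully. Writing $R_Q$ for the finite residual of $Q$, each vertex-stabiliser $N(H)/K(G)$ is a finite index subgroup of $Q$, so $R_Q$ is contained in every one of them and hence in their intersection; but that intersection is precisely the kernel of the faithful action of $Q$ on the vertex set, namely $(\bigcap_{H \le_f G} N(H))/K(G) = \{e\}$. Thus $R_Q$ is trivial and $Q$ is residually finite. I would remark that one could instead first replace $G$ by $G/R_G$ (using $\mathcal{S}(G) = \mathcal{S}(G/R_G)$ together with $R_G \subseteq K(G)$) and then appeal to Proposition 5.5, but the argument just given requires neither Proposition 5.5 nor residual finiteness of $G$; finite generation of $G$ enters only to guarantee that $Q$ is itself finitely generated, making residual finiteness the natural conclusion to record. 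The one real obstacle here is conceptual rather than technical: recognising that a faithful action with all orbits finite already forces residual finiteness, so that no structure theory is needed at this final step.
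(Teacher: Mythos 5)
Your proof is correct, and it shares the paper's overall skeleton --- faithfulness is definitional, connectedness comes from Proposition 5.3, the isometric action from Proposition 5.4, and residual finiteness of $Q=G/K(G)$ is deduced from ``faithful action with all orbits finite'' --- but it diverges at the one step where the paper actually uses finite generation. The paper gets finite orbits by citing the well-known fact that a finitely generated group has only finitely many subgroups of any given finite index, so that the conjugation orbit of $H$ sits inside the finite set of subgroups of index $[G:H]$. You instead note that the stabiliser of the vertex $H$ is exactly $N(H)$, which contains $H$ and so has index at most $[G:H]$ in $G$; orbit--stabiliser then gives a finite orbit, and you handle edge-interior points via $N(H)\cap N(M)$, a detail the paper leaves implicit. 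Your route is more elementary and strictly more general: it shows that every part of the conclusion --- the faithful isometric action of $G/K(G)$ on the connected graph ${\cal S}(G)$ with finite orbits, and the residual finiteness of $G/K(G)$ --- holds for an arbitrary group $G$, with finite generation needed only if one also wants $G/K(G)$ itself to be finitely generated, exactly as you remark. The final step is the same argument in two guises: the paper works pointwise (given $q\neq e$ in $Q$, pick a vertex moved by $q$; its stabiliser is a finite index subgroup of $Q$ missing $q$), whereas you argue globally (the finite residual $R_Q$ lies in every vertex stabiliser $N(H)/K(G)$, and the intersection of these is trivial by faithfulness); under the paper's definition of residual finiteness via the finite residual, these are interchangeable.
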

\begin{proof}
The first part follows from the results of this section, along with the
well known fact that a finitely generated group has only finitely
many subgroups of a given finite index. As $Q=G/K(G)$  
acts faithfully on ${\cal S}(G)$, if $q$ is not the identity in $Q$ then
we have $s\in {\cal S}(G)$ with $q(s)\neq s$. But as the orbit of $s$
is finite, the stabiliser of $s$ is a finite index subgroup of $Q$
missing $q$.
\end{proof}


\begin{thebibliography}{99}

\bibitem{alkl} N.\,Alon and D.\,J.\,Kleitman,
Sum-free subsets.
{\it A tribute to Paul Erd\"os}, 13--26, Cambridge Univ. Press, Cambridge,
1990.
 
\bibitem{arstherz} Z.\,Arad, J.\,Stavi and M.\,Herzog,
Powers and products of conjugacy classes in groups.
{\it Products of conjugacy classes in groups}, 6--51,
Lecture Notes in Math. {\bf 1112}, Springer, Berlin, 1985.

\bibitem{soda} L.\,Babai, N.\,Nikolov and L.\,Pyber,
Product growth and mixing in finite groups. {\it Proceedings of the
Nineteenth Annual ACM-SIAM Symposium on Discrete Algorithms},
248--257, ACM, New York, 2008.

\bibitem{astqu} Y.\,Berkovich,
Questions on set squaring in groups.
{\it Structure theory of set addition},
Asterique {\bf 258} (1999), 249--253.

\bibitem{bherz} E.\,A.\,Bertram and M.\,Herzog,
{\it On medium-size subgroups and bases of finite groups},
J. Combin. Theory Ser. A {\bf 57} (1991), 1--14.

\bibitem{borpas} K.\,J.\,B\"or\"oczky, P.\,P.\,P\'alfy and O.\,Serra,
{\it On the cardinality of sumsets in torsion-free groups},\\
\texttt{http://arxiv.org/abs/1009.6140} (2010)

\bibitem{ekp} S.\,Eliahou, M.\,Kervaire and A.\,Plagne,
{\it Optimally small sumsets in finite abelian groups},
J. Number Theory {\bf 101} (2003), 338--348.

\bibitem{ekeu} S.\,Eliahou and M.\,Kervaire,
{\it The small sumsets property for solvable finite groups},
European J. Combin. {\bf 27} (2006), 1102--1110.

\bibitem{ekdm} S.\,Eliahou and M.\,Kervaire,
{\it Minimal sumsets in finite solvable groups},
Discrete Math. {\bf 310} (2010), 471--479.

\bibitem{mark} G.\,A.\,Freiman,
Groups and the inverse problems of additive number theory (Russian).
{\it Number-theoretic studies in the Markov spectrum and in the
structural theory of set addition}, 148-162. Kalinin. Gos. Univ.,
Moscow, 1973.

\bibitem{gw} W.\,T.\,Gowers,
{\it Quasirandom groups},
Combin. Probab. Comput. {\bf 17} (2008), 363--387.

\bibitem{hpeu} F.\,Hennecart and A.\,Plagne,
{\it On the subgroup generated by a small doubling binary set},
European J. Combin. {\bf 24} (2003), 5--14.

\bibitem{kn} M.\,Kassabov and N.\, Nikolov,
{\it Cartesian products as profinite completions},
Int. Math. Res. Not. (2006), Art. ID 72947, 17pp.

\bibitem{ked1} K.\,S.\,Kedlaya,
{\it Product-free subsets of groups},
Amer. Math. Monthly {\bf 105} (1998), 900-906.

\bibitem{ked2} K.\,S.\,Kedlaya,
Product-free subsets of groups, then and now.
{\it Communicating mathematics}, 169--177,
Contemp. Math. {\bf 479}, Amer. Math. Soc., Providence, RI, 2009.

\bibitem{kp56} J.\,H.\,B.\,Kemperman,
{\it On complexes in a semigroup},
Indag. Math. {\bf 18} (1956), 247--254.

\bibitem{ls} A.\,Lubotzky and D.\,Segal, Subgroup growth.
Progress in Mathematics 212, Birkha\"user Verlag, Base, 2003.

\bibitem{nath} M.\,B.\,Nathanson,
Additive Number Theory. Inverse Problems and the Geometry of
Sumsets. Graduate Texts in Mathematics, 165.
Springer-Verlag, New York, 1996.

\bibitem{rz} I.\,Z.\,Ruzsa,
Sums of finite sets, in:
{\it Number Theory}, Springer-Verlag, New York, 1996, 281--293.

\bibitem{saf} S.\,Safin,
{\it Powers of sets in free groups},\\
\texttt{http://arxiv.org/abs/1005.1820} (2010)

\bibitem{schenk} E.\,Schenkman,
{\it On the norm of a group},
Illinois J. Math. {\bf 4} (1960), 150--152.

\bibitem{taob}
T.\,Tao and V.\,Vu,
Additive Combinatorics, Cambridge Studies in Advanced Mathematics, 105.
Cambridge University Press, Cambridge, 2006.

\bibitem{taop}
T.\,Tao,
{\it Product set estimates for non-commutative groups},
Combinatorica {\bf 28} (2008), 547--594.

\bibitem{wink} R.\,Winkler,
Hartman sets, functions and sequences - a survey.
{\it Probability and number theory - Kanazawa 2005}, 517--543,
Adv. Stud. Pure Math. {\bf 49}, Math. Soc. Japan, Tokyo, 2007. 

\end{thebibliography}
\end{document}